\numberwithin{equation}{section}
\def\nfrac#1#2{{\textstyle\frac{#1}{#2}}}
\def\({\bigl(}
\def\){\bigr)}
\newtheorem{thm}{Theorem}[section]
\newtheorem{cor}[thm]{Corollary}
\newtheorem{lemma}[thm]{Lemma}
\theoremstyle{definition}
\newtheorem*{remark*}{Remark}
\newcommand{\widesim}[2][1.3]{
  \mathrel{\overset{#2}{\scalebox{#1}[1]{$\sim$}}}
}
\def\simj#1{\widesim{#1}}
\def\abs#1{\lvert#1\rvert} \let\card=\abs
\def\dfrac#1#2{\lower0.15ex\hbox{\large$\textstyle\frac{#1}{#2}$}}
\def\({\bigl(}
\def\){\bigr)}
\def\st{\mathrel{|}}
\let\eps=\varepsilon
\def\X{\boldsymbol{X}}
\def\dvec{{\boldsymbol{d}}}
\def\xvec{{\boldsymbol{x}}}
\def\bvec{{\boldsymbol{b}}}
\def\yvec{{\boldsymbol{y}}}
\def\gvec{{\boldsymbol{g}}}
\def\a{{\phi_{\mathrm{min}}}}  
\def\b{{\phi_{\mx}}} 
\def\Hd{{H_{\dvec}}} 
\def\calF{\mathcal{F}}
\def\calG{\mathcal{G}}
\def\calT{\mathcal{T}}
\def\mx{{\mathrm{max}}}
\def\dmax{d_{\mx}}
\def\gmax{g_{\mx}}
\def\xmax{x_{\mx}}
\def\eg{\beta}  
\def\barg{\bar{\mu}}
\def\E{\operatorname{\mathbb{E}}}
\def\Reals{{\mathbb{R}}}
\title{The average number of spanning trees in\\ sparse graphs with given degrees\thanks{Research supported by the Australian Research Council, Discovery Project DP140101519.}} 
\author{
Catherine Greenhill\\
\small School of Mathematics and Statistics\\[-0.8ex]
\small UNSW Australia\\[-0.8ex]
\small Sydney NSW 2052, Australia\\
\small \tt c.greenhill@unsw.edu.au\\
\and
Mikhail Isaev\\
\small Research School of Computer Science\\[-0.8ex]
\small Australian National University\\[-0.8ex]
\small Canberra, ACT 2601, Australia\\
\small Moscow Institute of Physics and Technology\\[-0.8ex]
\small Dolgoprudny, 141700, Russia\\
\small\tt isaev.m.i@gmail.com 
\and
Matthew Kwan\\
\small Department of Mathematics\\[-0.8ex]
\small ETH Z{\" u}rich\\[-0.8ex]
\small Z{\" u}rich, 8092, Switzerland\\[-0.8ex]
\small\tt matthew.kwan@math.ethz.ch\\
\and
Brendan D. McKay\\
\small Research School of Computer Science\\[-0.8ex]
\small Australian National University\\[-0.8ex]
\small Canberra, ACT 2601, Australia\\
\small\tt brendan.mckay@anu.edu.au
}
\date{20 February 2017}
\begin{document}

\maketitle

\begin{abstract}
We give an asymptotic expression for the expected number of spanning trees
in a random graph with a given degree sequence $\dvec=(d_1,\ldots, d_n)$,
provided that the number of edges is at least $n + \nfrac{1}{2} \dmax^4$,
where $\dmax$ is the maximum degree.
A key part of our argument involves establishing a concentration result for
a certain family of functions over random trees with given degrees, using Pr{\" u}fer
codes. 
\end{abstract}

\section{Introduction}\label{s:intro}

The number of spanning trees $\tau(G)$ in a graph $G$ (also called the \emph{complexity} of $G$) is an important graph parameter that has connections to a wide range of topics, including the study of electrical networks, algebraic graph theory, statistical physics and number theory (see for example \cite{Alo90,MW99,Sha87,Wu77}). These connections are largely related to the \emph{matrix tree theorem}, which says that $\tau(G)$ is equal to any cofactor of the Laplacian matrix of $G$.

There is a large body of existing work concerning the approximate value of 
$\tau (G)$ for graphs with given degree sequences, and random graphs with
given degree sequences, especially in the regular case. 
Let $\dvec=(d_1,\ldots,d_n)$ be a vector of positive integers with even sum, and
let $\Gamma_{\dvec}$ denote the set of all graphs on the vertex set $\{1,2,\ldots, n\}$ with degree sequence $\dvec$. 
If~every entry of $\dvec$ equals $d$ then we
write $\Gamma_{n,d}$ for the set of all $d$-regular graphs on $\{ 1,2, \ldots, n\}$.
Let $\calG_{\dvec}$ be the random graph with degree sequence $\dvec$, chosen
uniformly at random from $\Gamma_{\dvec}$, and let $\calG_{n,d}$ be the random
$d$-regular graph on vertex set $\{ 1,2,\ldots, n\}$, chosen uniformly at random  from $\Gamma_{n,d}$.
Unless otherwise stated, all asymptotics in this paper hold as $n\to\infty$,
possibly along some infinite subsequence of $\mathbb{N}$.

The number of spanning trees in a graph is strongly controlled by its degree sequence. 
Let
\[ d = \frac 1n\sum_j d_j, \qquad
  \hat d = \biggl(\,\prod_{j=1}^n d_j\biggr)^{\!1/n}\]
denote the arithmetic and geometric means of the degree sequence $\dvec$.
The best uniform upper bound for regular graphs is due to McKay \cite{McK83}, who proved that when $d\geq 3$,
\[
\tau(G)=O(1) \biggl( \frac{(d-1)^{d-1}}{(d^2-2d)^{d/2-1}}\biggr)^{\!n}
                \frac{\log n}{nd\log d}
\]
for all $G\in \Gamma_{n,d}$.
This was proved sharp within a constant by Chung and Yau~\cite{chung}.
Kostochka~\cite{Kos95} proved that 
\[ \(\hat d(1-\varepsilon)\)^n \leq \tau(G)\leq \frac{\hat{d}^n}{n-1}\]
 for any connected $G\in \Gamma_{\dvec}$, 
where $\varepsilon =\varepsilon(\delta)>0$ tends to zero 
as $\delta=\min_j d_j\to \infty$.  This lower bound extended a result of 
Alon~\cite{Alo90} on $\tau(G)$ in the case of $d$-regular graphs.

To discuss random graphs, define the random variables 
\[ \tau_\dvec=\tau(\calG_{\dvec}) \,\, \text{ and } \,\,
   \tau_{n,d} = \tau(\calG_{n,d}).\]
That is, $\tau_{\dvec}$ is the 
number of spanning trees in $\calG_{\dvec}$, and $\tau_{n,d}$ is the number of
spanning trees in $\calG_{n,d}$. 
McKay \cite{McK81} proved that for fixed $d$, 
\[
\tau_{n,d}^{1/n}\to \frac{(d-1)^{d-1}}{(d^2-2d)^{d/2-1}}
\]
with probability 1. 
An alternative proof in a much more general framework was given by Lyons in \cite[Example~3.16]{Lyo05}.

McKay~\cite{subgraphs} gave the expected value $\E \tau_\dvec$ to within a constant 
factor, in the case that $d_j = O(1)$ for all~$j$ and the average degree is at least 
$2+\eps$, for some $\eps > 0$.
Specifically, McKay proved that under these conditions,
the expected number of spanning trees is
\begin{equation}
\label{McKay}
    \E \tau_\dvec=\Theta(1) \;\frac 1n \biggl( \frac{\hat d\,(d-1)^{d-1}}
                                   {d^{d/2} (d-2)^{d/2-1}}
                           \biggr)^{\!n}.
\end{equation}
Greenhill, Kwan and Wind~\cite{GKW} recently
found the asymptotic value of this $\Theta(1)$ factor, for 
random $d$-regular graphs with $3\le d = O(1)$. Specifically,
they proved that the  $\Theta(1)$ in (\ref{McKay}) is asymptotic to the constant 
\begin{equation}\label{GKW-old}
    \frac{(d-1)^{1/2}}{(d-2)^{3/2}}\,
   \exp\biggl( \frac{6d^2-14d+7}{4(d-1)^2} \biggr).
\end{equation}
(This is about $e^{3/2}/d$ for large $d$.)
They also gave the asymptotic distribution of the number
of spanning trees in a random cubic graph.

In this paper, we obtain an asymptotic expression for $\E \tau_\dvec$ for a 
wider range of sparse degree sequences $\dvec$ than in any of the above 
random graph results.

Our main result is the following. 

\begin{thm}
\label{main-sparse}
Let $\dvec = \dvec(n)=(d_1,\ldots, d_n)$ be a vector of positive integers
with even sum, for every $n$ in some infinite subsequence of $\mathbb{N}$. 
Define
\begin{align*}
d_\mx &= \max_j d_j,\qquad d = \dfrac{1}{n}\, \sum_{j=1}^n d_j, \qquad
\hat d = \biggl(\,\prod_{j=1}^n d_j\biggr)^{\!1/n}, \qquad
R =  \dfrac{1}{n}\, \sum_{j=1}^n (d_j - d)^2
\end{align*}
and let
\[ \Hd =
 \frac{(d-1)^{1/2}}{(d-2)^{3/2}\, n}\,
     \biggl( \frac{\hat{d}\, (d-1)^{d-1}}{d^{d/2}\, (d-2)^{d/2-1}}\biggr)^{\!n} .
\]
Suppose that $d_\mx^4 \leq (d-2) n$.
Then the sequence $\dvec$ is graphical for sufficiently large $n$, and
the expected number of spanning trees in $\mathcal{G}_{\dvec}$ is given by
\[
\E\tau_{\dvec}
 =  \Hd\,
\exp\biggl( \frac{6d^2-14d+7}{4(d-1)^2}
       + \frac{R}{2(d-1)^3}
       + \frac{(2d^2-4d+1)R^2}{4(d-1)^4\, d^2} 
    + O\biggl( \frac{d_\mx^4}{(d-2)n} + \eta\biggr)\biggr),
\]
where
\begin{align*}
  \eta &= \min\biggl\{ \frac{d_\mx^4}{(d-2)^2n},\,
                                 \frac{d_\mx^3\log n}{(d-2)n},\,
                                 d_\mx(d-2)\biggr\} \\
         &= O\biggl( \frac{d_\mx^4}{(d-2)n} + \frac{(\log n)^{5/2}}{n^{1/2}}\biggr).
\end{align*}
\end{thm}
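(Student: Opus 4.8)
The plan is to use the matrix tree theorem together with the switching/enumeration machinery for graphs with given degrees. First I would write $\E\tau_{\dvec}$ as a sum over labelled trees $T$ on $\{1,\ldots,n\}$ of the probability that $T\subseteq\calG_{\dvec}$. If $T$ has degree sequence $\tvec=(t_1,\ldots,t_n)$ with each $t_j\ge 1$ and $\sum_j t_j = 2(n-1)$, then $\Prob(T\subseteq\calG_{\dvec})$ depends only on $\tvec$ (and $\dvec$): it equals the number of graphs in $\Gamma_{\dvec}$ containing a fixed such $T$, divided by $\abs{\Gamma_{\dvec}}$. Both quantities can be estimated by the standard enumeration results for sparse degree sequences (McKay--Wormald--type formulas, or switchings), giving $\Prob(T\subseteq\calG_{\dvec}) = F(\tvec)\,(1+o(1))$ for an explicit function $F$ that factorises roughly as $\prod_j \binom{d_j}{t_j}(d_j-t_j)!\big/\text{(normalisation)}$, times a correction term that is quadratic in the $t_j$'s through quantities like $\sum_j t_j^2$ and $\sum_j d_j t_j$. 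So the whole problem reduces to understanding
\[
\E\tau_{\dvec} = \abs{\Gamma_{\dvec}}^{-1}\sum_{\text{trees }T} N(\tvec_T),
\]
i.e. to an expectation of $F(\tvec)$ over the set of all labelled trees, weighted by how many trees realise each degree sequence.

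The natural way to handle that weighted sum is via Prüfer codes: a uniformly random labelled tree on $n$ vertices has Prüfer code a uniformly random word in $\{1,\ldots,n\}^{n-2}$, and the degree $t_j$ of vertex $j$ is $1$ plus the number of occurrences of $j$ in the code. Thus $(t_1-1,\ldots,t_n-1)$ is a ${\rm Mult}(n-2; \tfrac1n,\ldots,\tfrac1n)$ vector, so each $t_j$ is approximately $1+\mathrm{Po}(1)$ and the $t_j$ are nearly independent. The key technical input — flagged in the abstract — is a concentration result for the relevant family of functions of $\tvec$ over random Prüfer codes: I would prove that statistics such as $\sum_j t_j\log d_j$, $\sum_j t_j^2$, $\sum_j d_j t_j$, etc., are sharply concentrated (with the error terms matching the $\eta$ and $d_\mx^4/((d-2)n)$ in the statement), e.g. by a bounded-differences / Azuma argument on the $n-2$ coordinates of the Prüfer code, being careful that changing one symbol changes two degrees by $O(1)$. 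Then $\E F(\tvec)$ can be evaluated by expanding $F$ to second order around the ``typical'' tree-degree sequence and using the multinomial/Poissonised moments of the $t_j$; this is where the terms $\tfrac{6d^2-14d+7}{4(d-1)^2}$ and the $R$-dependent corrections $\tfrac{R}{2(d-1)^3}$ and $\tfrac{(2d^2-4d+1)R^2}{4(d-1)^4 d^2}$ come out, the $R$-terms arising precisely because $\prod_j d_j^{t_j-1}$ and similar products, when averaged, pick up $\exp(\text{variance-type corrections})$ involving $R=\tfrac1n\sum_j(d_j-d)^2$.

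Assembling the pieces: combine the enumeration asymptotics for $\abs{\Gamma_{\dvec}}$ and for $N(\tvec)$, substitute the concentrated values of the tree statistics, and simplify the resulting exponential. A consistency check is that in the $d$-regular case $R=0$ and the formula must collapse to $\Hd\exp(\tfrac{6d^2-14d+7}{4(d-1)^2}+O(\cdots))$, matching \eqref{GKW-old} and \eqref{McKay}; one should also verify that the hypothesis $d_\mx^4\le(d-2)n$ is exactly what is needed for all error terms (from enumeration, from concentration, and from the Taylor expansion of $F$) to be $o(1)$, and that it in particular forces $\dvec$ to be graphical for large $n$. I expect the main obstacle to be the concentration step over Prüfer codes: one must control not just a single linear statistic but a whole family (including quadratic ones like $\sum_j t_j^2$) uniformly and with error terms sharp enough to produce the stated remainder $O\!\bigl(\tfrac{d_\mx^4}{(d-2)n}+\eta\bigr)$, and the bounded-differences constants degrade with $d_\mx$, so getting the $d_\mx$-dependence right — and knowing when to switch among the three bounds defining $\eta$ — is the delicate part. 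A secondary obstacle is making the enumeration-formula error terms for $N(\tvec)$ uniform over the range of tree-degree sequences $\tvec$ that contribute non-negligibly.
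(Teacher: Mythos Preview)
Your proposal has a genuine gap at the first step. You assert that $\Prob(T\subseteq\calG_{\dvec})$ depends only on the tree degree sequence $\tvec$, so that the correction term in the enumeration formula is a function of statistics like $\sum_j t_j^2$ and $\sum_j d_j t_j$. This is false at the precision required. McKay's enumeration formula for $N(\dvec-\tvec,T)$ contains in its exponent the term
\[
  \mu(T) \;=\; \frac{1}{(d-2)n+2}\sum_{\{i,j\}\in E(T)} (d_i-t_i)(d_j-t_j),
\]
which depends on \emph{which} edges $T$ has, not merely on $\tvec$. Two trees in $\calT_{\xvec}$ with different edge sets will in general have different $\mu(T)$, and this term is not absorbed by the error: it can be of the same order as the main correction terms you are trying to extract. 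It is not a function of $\sum_j t_j^2$ or $\sum_j d_j t_j$.

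This is precisely why the concentration result in the abstract is over trees \emph{with given degrees}. The paper fixes $\xvec$ first and proves that $\mu(T)$ is concentrated within $\calT_{\xvec}$, so that the average of $e^{-\mu(T)}$ over $\calT_{\xvec}$ is close to $e^{-\barg(\xvec)}$ (Theorem~\ref{tree-general}, Lemma~\ref{Eeg}). The martingale runs along the Pr\"ufer process restricted to $\calT_{\xvec}$, and the point that costs real work is bounding the conditional ranges: a naive bounded-differences estimate gives $O((\b-\a)^2)$ per step, which is too crude; one has to use the explicit formula of Lemma~\ref{tree-prob}(ii) for $\bar F$ to extract cancellation and obtain the bound $L_\phi$ involving $\|\phi\|_m$. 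Only after this step does the problem reduce to averaging a function of $\xvec$ alone.

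A second, smaller discrepancy: once $\mu(T)$ has been replaced by $\barg(\xvec)$ and one sums over $\xvec$, the relevant distribution on $\xvec$ is not the multinomial coming from a uniformly random Pr\"ufer word. The factor $\prod_j(d_j)_{x_j}$ from the enumeration formula, combined with $|\calT_{\xvec}|$, yields weights $\prod_j\binom{d_j-1}{x_j-1}$, which correspond to a multivariate hypergeometric: a random $(n-2)$-subset of $\{1,\ldots,(d-1)n\}$ partitioned into blocks of sizes $d_j-1$ (Lemma~\ref{useful-later}). The second concentration step (Corollary~\ref{subsets}) and the moment computation (Lemma~\ref{g-expression}) are done in this hypergeometric model. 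Your Poissonised multinomial picture would give the wrong constants.
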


Some remarks about this result are given below.
\begin{itemize}
\item
Due to the Erd{\H o}s-Gallai Theorem,
under the conditions of Theorem~\ref{main-sparse} the sequence $\dvec$ is always graphical 
(without any requirement for $n$ to be large). 
Since this fact is not required for our asymptotic formula, we omit the proof.
\item
Since $\dmax \geq 1$, the condition $\dmax^4\leq (d-2) n$ 
implies that $d > 2$.
\item
Other than the relative error term, the expression given by Theorem~\ref{main-sparse} matches 
\eqref{GKW-old} in the regular case,
showing that the formula obtained in~\eqref{GKW-old} for regular graphs with 
constant~$d\geq 3$
also holds for $d$-regular graphs with slowly growing $d$ (in particular, it holds when $d=o(n^{1/3})$).
\item
Under our assumptions, the relative error term may not be vanishing,
though it is always bounded.  Let $m = \nfrac{1}{2}\sum_{j=1}^n d_j$ be the number of
edges in any graph in $\Gamma_{\dvec}$. The condition $\dmax^4 \leq (d-2)n$ is 
equivalent to the condition that $m\geq n + \nfrac{1}{2} \dmax^4$, or in other words,
that there are at least $\nfrac{1}{2}\dmax^4 + 1$ more edges in any graph in 
$\Gamma_{\dvec}$ than in a tree on $n$ vertices.  For example, when $\dmax = 3$, 
our result holds with a bounded error if the number of edges exceeds $n-1$ by at least
42.
\end{itemize}

In particular, we have the following corollary when $d$ is close to 2.

\begin{cor}
Suppose that $d=2+2x/n$ where
$\nfrac{1}{2}\dmax^4 \leq x\leq n^{1/2}$.
(This corresponds to graphs with $n+x$ edges.)
Then
\begin{align*}
\E \tau_{\dvec} &= \frac1n\left(\frac{e}{2}\right)^x\, \left(\frac{n}{2x}\right)^{3/2 + x}
   \, \biggl(\frac{\hat{d}}{2}\,\biggr)^n\,
  \exp\left(\frac{(6+R)(2+R)}{16} 
+ \frac{3x^2}{2n} +
            O\left(\frac{\dmax^4}{x} + \frac{x^3}{n^2}\right)\right).
\end{align*}
\label{close-to-2}
\end{cor}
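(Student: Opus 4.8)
The plan is to derive Corollary~\ref{close-to-2} directly from Theorem~\ref{main-sparse} by substituting $d = 2 + 2x/n$ and simplifying each piece of the formula under the assumption $\nfrac12\dmax^4 \le x \le n^{1/2}$. First I would check that the hypotheses of the theorem hold: the condition $\dmax^4 \le (d-2)n = 2x$ is exactly $\nfrac12\dmax^4 \le x$, so Theorem~\ref{main-sparse} applies, and $\dvec$ is graphical for large $n$. Note also that $d - 2 = 2x/n \to 0$ since $x \le n^{1/2}$, and $x \to \infty$ since $x \ge \nfrac12\dmax^4 \ge \nfrac12$ forces $x$ unbounded along the subsequence (here one uses that $\dmax \ge 1$, but in fact we need $x\to\infty$, which follows because otherwise the error terms are not controlled; more carefully, $x\ge \nfrac12$ always and the asymptotic statement is vacuous unless $x\to\infty$, so we may assume it).

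Next I would expand $\Hd$. Write $d - 1 = 1 + 2x/n$ and $d - 2 = 2x/n$. Then
\[
\frac{(d-1)^{1/2}}{(d-2)^{3/2}n} = \frac{(1+2x/n)^{1/2}}{(2x/n)^{3/2}n} = \frac{1}{n}\left(\frac{n}{2x}\right)^{3/2}(1 + 2x/n)^{1/2},
\]
and $(1+2x/n)^{1/2} = \exp(x/n + O(x^2/n^2))$. For the exponential-in-$n$ factor, I would take logarithms:
\[
n\log\!\left(\frac{\hat d (d-1)^{d-1}}{d^{d/2}(d-2)^{d/2-1}}\right) = n\log\hat d + n(d-1)\log(d-1) - \tfrac{n d}{2}\log d - n\left(\tfrac d2 - 1\right)\log(d-2).
\]
Here $n\log\hat d = \sum_j \log d_j$, giving the $(\hat d/2)^n$ contribution after pulling out $n\log 2$ appropriately; the term $-n(\tfrac d2 - 1)\log(d-2) = -x\log(2x/n) = x\log(n/2x)$, contributing the $(n/2x)^x$ factor. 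The remaining terms $n(d-1)\log(d-1) - \tfrac{nd}2\log d$ I would Taylor-expand around $d = 2$ using $\log(1 + 2x/n) = 2x/n - 2x^2/n^2 + O(x^3/n^3)$ and $\log(2 + 2x/n) = \log 2 + x/n - x^2/(2n^2) + O(x^3/n^3)$; collecting terms should produce the $(e/2)^x$ factor together with an $O(x^3/n^2)$ error and a contribution to the $\exp(3x^2/2n)$ term. Separately, the factor $2^{-n}$ coming out of $d^{d/2}$-type terms and the $n\log 2$ from $\hat d$ must be tracked to assemble $(\hat d/2)^n$ cleanly.

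Then I would handle the exponent inside the $\exp(\cdots)$ of Theorem~\ref{main-sparse}. Since $d - 1 = 1 + O(x/n) = 1 + o(1)$, we have $\frac{6d^2 - 14d + 7}{4(d-1)^2} \to \frac{24 - 28 + 7}{4} = \frac34$, and more precisely this equals $\frac{(6+R)(2+R)}{16}$ only after combining with the $R$-terms: indeed at $d = 2$, $\frac{R}{2(d-1)^3} = \frac R2$ and $\frac{(2d^2-4d+1)R^2}{4(d-1)^4 d^2} = \frac{R^2}{16}$, so the three terms sum to $\frac34 + \frac R2 + \frac{R^2}{16} + (\text{corrections of order }x/n) = \frac{12 + 8R + R^2}{16} + O(x/n) = \frac{(6+R)(2+R)}{16} + O(x/n)$, and the $O(x/n)$ correction (note $R$ may grow, but a factor $R$ or $R^2$ times $x/n$ must be absorbed — here one should check $R = O(\dmax^2) = O(x^{1/2})$ so $R^2 x/n = O(x^2/n) = O(x^{3/2}/n^{1/2}\cdot x^{1/2}/n^{1/2})$, which is dominated by $O(x^3/n^2)$ only if $x\ge n^{?}$... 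I would verify this bound carefully). Finally the error term $O\!\left(\frac{\dmax^4}{(d-2)n} + \eta\right) = O\!\left(\frac{\dmax^4}{2x} + \eta\right)$, and since $\dmax^4 \le 2x$ gives $\dmax^3 \le (2x)^{3/4}$, the $\eta$ minimum is bounded by $\frac{\dmax^4}{(d-2)^2 n} = \frac{\dmax^4 n}{4x^2} \le \frac{n}{2x}$... hmm, that is large; instead use $d_\mx(d-2) = 2\dmax x/n$, which since $\dmax^4\le 2x\le 2n^{1/2}$ gives $\dmax \le (2n^{1/2})^{1/4}$, so $d_\mx(d-2) = O(n^{1/8}\cdot n^{-1/2}) = o(1)$, hence $\eta = o(1)$ and is absorbed. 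So the total error is $O(\dmax^4/x + x^3/n^2)$ as claimed.

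\textbf{Main obstacle.} The genuinely delicate part is bookkeeping the $O$-terms: one must confirm that every correction generated by Taylor expansion around $d = 2$ — in particular cross terms involving $R$ (which is not bounded, only $R = O(\dmax^2) = O(x^{1/2})$) multiplied by powers of $x/n$ — is actually dominated by $O(\dmax^4/x) + O(x^3/n^2)$ over the whole range $\nfrac12\dmax^4 \le x \le n^{1/2}$. For instance, the term $\frac{R}{2(d-1)^3}$ expands as $\frac R2(1 - 3\cdot 2x/n + \cdots) = \frac R2 - \frac{3Rx}{n} + \cdots$, and $\frac{Rx}{n} = O(x^{3/2}/n)$; since $x \le n^{1/2}$ this is $O(1/n^{1/4})\cdot O(1) $... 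I would need $Rx/n = O(\dmax^4/x + x^3/n^2)$, i.e. $R x^2 \le n\dmax^4 + x^4/n$ up to constants, and using $R\le \dmax^2$ and $\dmax^4 \le 2x$ this becomes $\dmax^2 x^2 \le 2nx$, i.e. $\dmax^2 x \le 2n$, which holds since $\dmax^2\le\sqrt{2x}\le 2n^{1/4}$ gives $\dmax^2 x \le 2n^{1/4}\cdot n^{1/2} = 2n^{3/4}\le 2n$. All such inequalities go through, but they must be checked individually, and the interplay between the three different regimes controlling $\eta$ and the crude bound $R = O(\dmax^2)$ is where an error would most easily creep in. Everything else is a routine (if lengthy) substitution and expansion.
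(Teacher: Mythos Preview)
Your approach is correct and is essentially the paper's own proof: substitute $d = 2 + 2x/n$ into Theorem~\ref{main-sparse} and Taylor-expand each factor. Your ``main obstacle'' dissolves with one observation the paper uses: since $R \le \dmax^2$, every correction from replacing $d$ by $2$ in the three exponent terms is $O(\dmax^4\, x/n)$, and (choosing $\eta \le (d-2)\dmax = 2\dmax x/n$) the entire error from Theorem~\ref{main-sparse} is $O(\dmax^4/x)$; both bounds follow immediately from $x^2 \le n$, so no term-by-term case analysis (and no assumption that $x\to\infty$) is needed.
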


\begin{proof}
We estimate the various terms in Theorem~\ref{main-sparse}. 
First note that 
\[ \left(d-1\right)^{1/2}=\left(1+2x/n\right)^{1/2}=e^{O\left(x/n\right)}\]
and 
\[
\frac{1}{(d-2)^{3/2}}\biggl(\frac{\hat{d}}{(d-2)^{d/2-1}}\biggr)^{n}=\hat{d}^{n}\left(\frac{n}{2x}\right)^{3/2+x}.
\]
Next, a series expansion yields
\begin{align*}
\log\biggl(\frac{(d-1)^{d-1}}{d^{d/2}}\biggr) & =\left(1+2x/n\right)\log\left(1+\frac{2x}{n}\right)-\left(1+x/n\right)\log\left(2+\frac{2x}{n}\right)\\
 & =-\log2+\left(1-\log2\right)\frac{x}{n}+\frac{3}{2}\left(\frac{x}{n}\right)^{2}+O\left(\left(\frac{x}{n}\right)^{3}\right)
\end{align*}
so we have
\[
\biggl(\frac{(d-1)^{d-1}}{d^{d/2}}\biggr)^{n}=2^{-n}\left(\frac{e}{2}\right)^{x}\exp\left(\frac{3x^{2}}{2n}+O\left(\frac{x^{3}}{n^{2}}\right)\right).
\]
Then, we can compute
\begin{align*}
\frac{6d^{2}-14d+7}{4\left(d-1\right)^{2}} & =\frac{3}{4}+O\left(\frac{x}{n}\right),\\
\frac{1}{2\left(d-1\right)^{3}} & =\frac{1}{2}+O\left(\frac{x}{n}\right),\\
\frac{\left(2d^{2}-4d+1\right)}{4\left(d-1\right)^{4}d^{2}} &= \frac{1}{16}+O\left(\frac{x}{n}\right).
\end{align*}
So, noting that $R\le\dmax^{2}$, we have 
\begin{align}
\frac{6d^{2}-14d+7}{4\left(d-1\right)^{2}}+\frac{R}{2\left(d-1\right)^{3}}+\frac{\left(2d^{2}-4d+1\right)R^{2}}{4\left(d-1\right)^{4}d^{2}} 
 & =\frac{\left(6+R\right)\left(2+R\right)}{16}+O\left(\frac{\dmax^4\, x}{n}\right).
\label{medium}
\end{align}
Finally, the error term from Theorem~\ref{main-sparse} is at most
\[ O\left(\frac{\dmax^4}{(d-2)n} + (d-2)\dmax\right) = O\left(\frac{\dmax^4}{x}
  \right).
\] 
Since this error term dominates the error from (\ref{medium}) under our
assumptions, the result follows.
\end{proof}

From Corollary~\ref{close-to-2} we see that when the average degree is
close to but above~2, and the geometric mean $\hat{d}$ is strictly greater than 2,
then $\E \tau_{\dvec}$ tends to infinity.
This can be true even for degree sequences where the probability of
connectivity tends to zero, even when Corollary~\ref{close-to-2} does not
apply. For example, consider the degree sequence
$\dvec$ with $n/2$ vertices of degree 5 and $n/2$ vertices of degree 1
(restricted to even $n$).  Here $d = 3$ and $\hat{d} = \sqrt{5} > 2$.
From Theorem~\ref{main-sparse} it follows that the expected number
of spanning trees in $\calG_{\dvec}$ is
\[ \Theta(1/n)\, \left(\frac{80}{27}\right)^{n/2}\]
which tends to infinity as $n\to \infty$.  However, the probability
that $\calG_{\dvec}$ is connected tends to zero.  To see this, we work
in the configuration model~\cite{bollobas}. 
For ease of notation, write $n=2t$ and let $S$ be the set
of configurations with $t$ cells containing 5 points and $t$ cells
containing 1 point.
If a configuration in $S$ gives rise to a connected graph 
then every point in a cell of size 1
is paired with a point from a cell of size 5. There are at most
\begin{equation}\label{example-numerator}
\frac{(5t)_t \, (4t)!}{2^{2t}\, (2t)!}
\end{equation}
such configurations,  and the probability that a random configuration
in $S$ is simple, conditioned on connectedness, is at most 1.
The total number of simple configurations in $S$ is
\begin{equation}\label{example-denominator}
\Theta(1) \, \frac{(6t)!}{2^{3t}\, (3t)!}
\end{equation}
where the $\Theta(1)$ factor is the probability that a random configuration in
$S$ is simple: this tends to a constant bounded away from zero, 
by~\cite[Theorem 4.6]{symm}.
Dividing \eqref{example-numerator} by \eqref{example-denominator} gives the upper bound 
\[ \Theta(1)\, \left( \frac{5^5}{2^7\, 3^3}\right)^{n/2} = o(1)\]
on the probability
that a random element of $\calG_{\dvec}$ is connected.

The case of dense irregular degree sequences will be treated
in a separate paper. 

\subsection{Outline of our approach }\label{s:outline}

Let $(a)_k$ denote the falling factorial $a(a-1)\cdots (a-k+1)$.  We say that a sequence $\xvec=(x_1,\ldots, x_n)$ of positive
integers  is a \emph{tree degree sequence} if the entries of $\xvec$ sum to $2n-2$.
We say that a tree degree sequence $\xvec$ is a
\emph{suitable} degree sequence if 
$1\leq x_j\leq d_j$ for all $j\in \{1,2,\ldots, n\}$.  (The intended meaning is that $\xvec$ is suitable
as a degree sequence for a spanning tree of a graph with degree sequence $\dvec$.)

For a suitable degree sequence $\xvec$, let $\calT_{\xvec}$ be the set of all 
trees with degree sequence~$\xvec = (x_1,\ldots, x_n)$
and $\calT$ be the set of all trees with vertex set $\{ 1,2,\ldots, n\}$.
It is well-known that
\begin{equation}\label{alltrees}
     \card{\calT_\xvec}= \binom{n-2}{x_1-1,\ldots,x_n-1}.
\end{equation}
(See for example~\cite[Theorem~3.1]{moon}.)
Let $\tau_{\dvec}(\xvec)$ denote the number of spanning trees of $\calG_{\dvec}$ with degree sequence $\xvec$, and denote by $P(\dvec,T)$ the probability that the 
random graph $\mathcal{G}_{\dvec}$  
has $T$ as a subgraph, for all $T\in \calT$.
Then the expected number of spanning trees with 
degree sequence $\xvec$ in $\mathcal{G}_{\dvec}$ can be written as
\begin{equation}
\label{nd}
 \E \tau_\dvec(\xvec) = \sum_{T\in \calT_\xvec} P(\dvec,T)
\end{equation}
and furthermore, the expected number of spanning trees (of any degree sequence)
in $\calG_{\dvec}$ is
\[ 
 \E \tau_\dvec = \sum_{\xvec} \E \tau_\dvec(\xvec)
\]
where the sum is over all suitable degree sequences $\xvec$.

We will estimate the summand in (\ref{nd}) using a theorem by 
McKay~\cite[Theorem~4.6]{symm}, which we will restate below,  including 
some necessary terminology, and with some minor rewording for consistency. 

\begin{thm}
\emph{\cite[Theorem 4.6]{symm}}\,\, 
\label{mckay}
Let $\gvec = (g_1,\ldots, g_n)$ be a sequence of non-negative integers with even sum $2m$,
and let $\gmax = \max\{ g_1,\ldots, g_n\}$. Let $X$ be a simple graph on the
vertex set $\{1,2,\ldots, n\}$ with degree
sequence $\xvec = (x_1,\ldots, x_n)$, where $\xmax = \max\{ x_1,\ldots, x_n\}$.
Suppose that $\gmax\geq 1$ and $\hat{\Delta} \leq \epsilon_1 m$,
where $\epsilon_1 < 2/3$ and $\hat{\Delta} = 2 + \gmax(\nfrac{3}{2}\gmax + \xmax + 1)$.
Define
\[ \lambda = \frac{1}{4m}\, \sum_{j=1}^n (g_i)_2 \quad \text{ and } \quad
      \mu = \frac{1}{2m}\, \sum_{ij\in X} g_ig_j.\]
Let $N(\gvec,X)$ denote the number of simple graphs with degree sequence $\gvec$ and no edge in common with
$X$.  Then 
\[ N(\gvec,X) = \frac{(2m)!}{m!\, 2^{m}\, \prod_{j=1}^n g_i!}\, \exp(-\lambda - \lambda^2 - \mu
   + O(\hat{\Delta}^2/m))
\] 
uniformly as $n\to\infty$. 
\end{thm}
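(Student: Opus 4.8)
The plan is to pass to the configuration model. Attach $g_j$ points to each vertex $j$, giving $2m$ points in total, and call a partition of these points into $m$ unordered pairs a \emph{configuration}; the number of configurations is $(2m)!/(m!\,2^m)$. Projecting each pair to the edge joining the vertices of its two points sends each configuration to a multigraph with degree sequence $\gvec$, and each \emph{simple} graph with degree sequence $\gvec$ is the image of exactly $\prod_{j} g_j!$ configurations (permute the points within each vertex). Writing $P$ for the probability that a uniformly random configuration projects to a simple graph having no edge in common with $X$, we therefore get
\[
N(\gvec,X) = \frac{(2m)!}{m!\,2^m\,\prod_{j} g_j!}\,P,
\]
so the whole theorem reduces to establishing $P = \exp(-\lambda - \lambda^2 - \mu + O(\hat{\Delta}^2/m))$.

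A configuration fails to be an $X$-avoiding simple graph exactly when it contains at least one \emph{loop} (a pair of points at a common vertex), \emph{double edge} (two pairs joining the same two vertices), or \emph{$X$-edge} (a pair joining two vertices that are adjacent in $X$). A first-moment computation identifies the three relevant parameters: the expected number of loops is asymptotic to $\frac{1}{2m}\sum_j \binom{g_j}{2} = \lambda$; the expected number of double edges is asymptotic to $\lambda^2$ (using $\sum_{i<j}(g_i)_2(g_j)_2 \approx \tfrac12(\sum_j (g_j)_2)^2$ in the sparse regime); and the expected number of $X$-edges is asymptotic to $\frac{1}{2m}\sum_{ij\in X} g_i g_j = \mu$. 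Were these three counts jointly Poisson and asymptotically independent, we would immediately get $P \approx e^{-\lambda}\,e^{-\lambda^2}\,e^{-\mu}$, which is the claimed main term.

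To turn this heuristic into the stated estimate with its error term I would use the switching method. Partition the configurations according to the triple $(\ell, s, f)$ recording their numbers of loops, double edges and $X$-edges. For each defect type define a switching that deletes one instance of it while preserving $\gvec$ — for instance, a loop switching replaces a loop at a vertex $a$ and an independently chosen pair $\{b,c\}$ by the two pairs joining $a$ to $b$ and $a$ to $c$. Counting the forward and reverse applications of each switching yields ratio estimates for the class sizes of the form
\[
\frac{\card{\text{class }(\ell,s,f)}}{\card{\text{class }(\ell-1,s,f)}} = \frac{\lambda}{\ell}\Bigl(1 + O(\hat{\Delta}/m)\Bigr),
\]
and analogously with $\lambda^2/s$ and $\mu/f$ for the other two coordinates. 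Iterating from the all-zero class reproduces the three Poisson factors, and summing the multiplicative errors over the (short, by the hypothesis $\hat{\Delta}\le\epsilon_1 m$ with $\epsilon_1 < 2/3$) range of significant profiles collapses them into the factor $\exp(O(\hat{\Delta}^2/m))$.

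The main obstacle is controlling these switching ratios uniformly and with the correct error order. The difficulty is that a switching designed to remove one defect can create another — a loop switching may produce a new double edge or land on an edge of $X$, and similarly for the other switchings — so the three families are genuinely coupled. One must bound the number of switchings that cross between defect types and verify that these cross-terms contribute only to the $O(\hat{\Delta}^2/m)$ error and not to the main exponent. The quantity $\hat{\Delta} = 2 + \gmax(\tfrac32\gmax + \xmax + 1)$ is exactly an upper bound on how many pairs a single switching can disturb near a given vertex, which is why it is the natural scale governing both the validity of the switchings (enough free pairs must exist, ensured by $\hat{\Delta}\le\epsilon_1 m$) and the size of the relative error.
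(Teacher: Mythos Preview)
This theorem is not proved in the paper at all; it is quoted verbatim (with minor rewording) from McKay's earlier work~\cite[Theorem~4.6]{symm} and then applied as a black box in the proof of Lemma~\ref{prob}. So there is no ``paper's own proof'' to compare your attempt against.

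That said, your sketch is a faithful outline of how the result is actually established in the cited reference: pass to the configuration model, classify configurations by their counts of loops, double edges and $X$-edges, and use switchings to estimate the ratios of adjacent class sizes, obtaining Poisson-like factors $e^{-\lambda}$, $e^{-\lambda^2}$, $e^{-\mu}$ with multiplicative error $\exp(O(\hat{\Delta}^2/m))$. Your identification of the main obstacle (cross-contamination between defect types when switching) is also correct, and the role you describe for $\hat{\Delta}$ is the right one. If you were asked to supply a proof rather than cite one, this would be the right plan, though turning it into a rigorous argument with the stated uniform error requires considerably more bookkeeping than your sketch indicates.
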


Given a suitable degree sequence $\xvec$ and a tree $T\in\mathcal{T}_{\xvec}$, 
define the parameters
\begin{align*}
  \lambda_0 &= \frac{1}{2dn}\, \sum_{j=1}^n \, (d_j)_2, \\
  \lambda(\xvec) &= \frac{1}{2(d-2)n+4} \, \sum_{j=1}^n \, (d_j-x_j)_2, \\
  \mu(T) &= \frac{1}{(d-2)n+2}\, \sum_{\{i,j\}\in E(T)} (d_i-x_i)(d_j-x_j)
\end{align*}
Using Theorem~\ref{mckay}, we may prove the following.

\begin{lemma} 
\label{prob}
Suppose that $\xvec$ is a suitable degree sequence and let $T\in\mathcal{T}_{\xvec}$. 
With notation as above, provided $\dmax^4\leq (d-2)n$,
\begin{align*}
     P(\dvec,T) 
      &= \frac{ (dn/2)_{n-1}\, 2^{n-1}}{  (dn)_{2n-2} }
            \prod_{j=1}^n (d_j)_{x_j} \exp\left( \lambda_0 + \lambda_0^2 - \lambda(\xvec) - \lambda(\xvec)^2 - \mu(T)
           + O\!\left(\frac{\dmax^4}{(d-2)n}\right) \right). 
\end{align*}
\end{lemma}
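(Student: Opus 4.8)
The plan is to express $P(\dvec,T)$ as a ratio counting completions of $T$ to a graph with degree sequence $\dvec$, and then apply Theorem~\ref{mckay} with a carefully chosen degree sequence. Fix $\xvec$ and $T\in\calT_{\xvec}$. Since $T$ is a fixed tree with $n-1$ edges and degree sequence $\xvec$, the event that $\calG_{\dvec}$ contains $T$ as a subgraph corresponds to choosing the remaining edges to form a simple graph $H$ on $\{1,\dots,n\}$ with degree sequence $\gvec := \dvec - \xvec$ (entrywise), having no edge in common with $T$. Hence
\[
P(\dvec,T) = \frac{N(\gvec,T)}{\card{\Gamma_{\dvec}}},
\]
and we will also need the standard asymptotic enumeration of $\card{\Gamma_{\dvec}}$ itself, which is the special case of Theorem~\ref{mckay} with $X$ the empty graph (so $\mu = 0$): writing $2M = dn$ for the degree sum of $\dvec$,
\[
\card{\Gamma_{\dvec}} = \frac{(dn)!}{(dn/2)!\, 2^{dn/2}\, \prod_j d_j!}\, \exp\bigl(-\lambda_0 - \lambda_0^2 + O(\dmax^4/(d-2)n)\bigr),
\]
where the error term comes from bounding $\hat\Delta^2/M$ with $\hat\Delta = 2 + \dmax(\tfrac32\dmax+\dmax+1) = O(\dmax^2)$ and $M = \Theta(dn)$, and noting $\lambda_0$ here is exactly $\frac{1}{4M}\sum_j(d_j)_2$. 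One checks the hypothesis $\hat\Delta \le \epsilon_1 M$ holds with room to spare since $\dmax^2 = O((\dmax^4/(d-2)n)^{1/2}\,(d-2)^{1/2}n^{1/2}\cdot\dots)$; more simply, $\dmax^4 \le (d-2)n \le dn = 2M$ gives $\dmax^2 \le \sqrt{2M}$, so $\hat\Delta = O(\sqrt M) = o(M)$.

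Next I would apply Theorem~\ref{mckay} to $N(\gvec,T)$ with $X=T$. Here the degree sum of $\gvec$ is $2m$ where $2m = dn - (2n-2) = (d-2)n + 2$, so $m = \tfrac12((d-2)n+2)$; note $\gmax \le \dmax$ and $\xmax \le \dmax$, so $\hat\Delta = 2 + \gmax(\tfrac32\gmax+\xmax+1) = O(\dmax^2)$, and the hypothesis $\hat\Delta \le \epsilon_1 m$ with $\epsilon_1 < 2/3$ follows from $\dmax^4 \le (d-2)n$, which gives $\dmax^2 \le ((d-2)n)^{1/2} = O(m^{1/2}) = o(m)$. The theorem's parameters are $\lambda_{\mathrm{Thm}} = \frac{1}{4m}\sum_j (g_j)_2 = \frac{1}{2((d-2)n+2)}\sum_j (d_j-x_j)_2 = \lambda(\xvec)$ and $\mu_{\mathrm{Thm}} = \frac{1}{2m}\sum_{ij\in T} g_ig_j = \frac{1}{(d-2)n+2}\sum_{\{i,j\}\in E(T)}(d_i-x_i)(d_j-x_j) = \mu(T)$, matching the definitions preceding the lemma exactly. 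So
\[
N(\gvec,T) = \frac{((d-2)n+2)!}{m!\, 2^{m}\, \prod_j (d_j-x_j)!}\, \exp\bigl(-\lambda(\xvec) - \lambda(\xvec)^2 - \mu(T) + O(\hat\Delta^2/m)\bigr),
\]
with $\hat\Delta^2/m = O(\dmax^4/((d-2)n)) = O(\dmax^4/((d-2)n))$ as required.

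Finally I would divide the two displays and simplify the factorial prefactor. The ratio of the combinatorial prefactors is
\[
\frac{((d-2)n+2)!\,/\,(m!\,2^m\,\prod_j(d_j-x_j)!)}{(dn)!\,/\,((dn/2)!\,2^{dn/2}\,\prod_j d_j!)}
= \frac{(dn/2)!}{m!}\cdot\frac{2^{dn/2}}{2^m}\cdot\frac{((d-2)n+2)!}{(dn)!}\cdot\prod_j\frac{d_j!}{(d_j-x_j)!},
\]
and using $dn/2 - m = (n-1)$, $dn - ((d-2)n+2) = 2n-2$, together with $\prod_j d_j!/(d_j-x_j)! = \prod_j (d_j)_{x_j}$, this becomes
\[
\frac{(dn/2)!}{m!}\cdot 2^{n-1}\cdot\frac{((d-2)n+2)!}{(dn)!}\cdot\prod_j (d_j)_{x_j}
= \frac{(dn/2)_{n-1}\,2^{n-1}}{(dn)_{2n-2}}\prod_j (d_j)_{x_j},
\]
since $(dn/2)!/m! = (dn/2)_{dn/2 - m} = (dn/2)_{n-1}$ and $(dn)!/((d-2)n+2)! = (dn)_{2n-2}$. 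Collecting the exponential factors $\exp(\lambda_0+\lambda_0^2)$ from the denominator and $\exp(-\lambda(\xvec)-\lambda(\xvec)^2-\mu(T))$ from the numerator, and combining the two $O(\dmax^4/((d-2)n))$ error terms into one, yields exactly the claimed formula. The main obstacle — really the only nontrivial point — is verifying that Theorem~\ref{mckay}'s hypotheses genuinely hold in both applications and that its error term $O(\hat\Delta^2/m)$ collapses to $O(\dmax^4/((d-2)n))$ under the single assumption $\dmax^4 \le (d-2)n$; the rest is bookkeeping with falling factorials. One should also double-check that $\gvec$ may have zero entries (when $x_j = d_j$) — Theorem~\ref{mckay} only requires $\gmax \ge 1$, i.e.\ at least one positive entry, which holds as long as $\xvec \ne \dvec$, and the degenerate case $\xvec = \dvec$ forces $\dvec$ itself to be a tree degree sequence, making $(d-2)n+2 = 0$ and putting us outside the stated regime $\dmax^4 \le (d-2)n$ anyway.
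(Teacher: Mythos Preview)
Your proof is correct and follows essentially the same approach as the paper: write $P(\dvec,T)=N(\dvec-\xvec,T)/N(\dvec,\emptyset)$, apply Theorem~\ref{mckay} to numerator and denominator, and simplify the resulting ratio of factorials. One small caveat on the point you yourself flagged: your verification of the hypothesis $\hat\Delta\le\epsilon_1 m$ via ``$\hat\Delta=O(m^{1/2})=o(m)$'' is not quite uniform, since the assumptions allow $(d-2)n$ (and hence $m$) to remain bounded as $n\to\infty$; the paper handles this by computing the explicit worst-case ratio $\hat\Delta/m\le 55/83<2/3$, attained at $\dmax=3$, and you should do the same.
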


\begin{proof}
There is a bijection between the set of graphs with degree sequence $\dvec$
which contain $T$, and those with degree sequence $\dvec - \xvec$ which contain
no edges of $T$.  Therefore, we can write
\begin{equation}
 P(\dvec,T) = \frac{N(\dvec-\xvec,T)}{N(\dvec,\emptyset)}
\label{PdT}
\end{equation}
and Theorem~\ref{mckay} to estimate the numerator and denominator.

First, consider $N(\dvec-\xvec, T)$. 
Let
\[ \hat{\Delta} = 2 + g_{\mx}\left(\nfrac{3}{2} g_{\mx} + x_{\mx} + 1\right)\]
where $g_{\mx} = \max_{j=1,\ldots, n} (d_j-x_j)$. 
We require that $\hat{\Delta}^2 \leq \eps_1 m$,
where $\eps_1 < \nfrac{2}{3}$ is a constant and $m = \nfrac{1}{2} ((d-2)n+2)$
is the number of edges in a graph with degree sequence $\dvec-\xvec$.
By assumption,
\[ m = \nfrac{1}{2}\, ((d-2)n+2) \geq 1 + \nfrac{1}{2}\dmax^4.\]
Since $g_{\mx}, x_{\mx}\leq d_{\mx}$ and $\dmax \geq 3$ (which follows since $d > 2$),
we have
\[ \frac{\hat{\Delta}}{m} \leq 
    \frac{2 + \dmax(\nfrac{5}{2}\dmax + 1)}{1 + \nfrac{1}{2} \, \dmax^4} \leq \frac{55}{83}
\]
which is strictly less than $\nfrac{2}{3}$.
Observe also that $\hat{\Delta} = O(\dmax^2)$.
Hence Theorem~\ref{mckay} applies and says that
\begin{align*}
N(\dvec-\xvec,T) &= \frac{((d-2)n+2)!}{((d-2)n/2 + 1)! 2^{(d-2)n/2+1}\, \prod_{j=1}^n (d_j-x_j)!}\\
  & \hspace*{4cm}
   \times \exp\left( -\lambda(\xvec) - \lambda(\xvec)^2 - \mu(T) + O\Big(\dmax^4/((d-2)n)\Big)\right).
\end{align*}
Similarly, we obtain 
\begin{align}
N(\dvec,\emptyset) &= \frac{(dn)!}{(dn/2)! 2^{dn/2}\, \prod_{j=1}^n d_j!}
           \, \exp\left( -\lambda_0 - \lambda_0^2 + O\Big(\dmax^4/((d-2)n)\Big)\right),
\label{Nd}
\end{align}
noting that the value of the $\hat{\Delta}$ is smaller than in the previous application
of Theorem~\ref{mckay}, while the parameter $m$ is larger.
Substituting these expressions into (\ref{PdT}) completes the proof.
\end{proof}

Observe that the only term in the argument of the exponential
in Lemma~\ref{prob} which depends on the structure of $T$
(rather than just the degree sequence of $T$) is $\mu(T)$.
For any suitable degree sequence $\xvec$ and any tree $T\in\calT_{\xvec}$, define
\begin{equation}
 f(\xvec)=\lambda_0+\lambda_0^2 - \lambda(\xvec)-\lambda(\xvec)^2 
\label{fg-def}
\end{equation}
and let 
\begin{equation}
\label{beta-def}
 \eg(\xvec) = \frac{1}{|\calT_\xvec|}\, \sum_{T\in\calT_{\xvec}}\, e^{- \mu(T)}
\end{equation}
be the average value of $e^{-\mu(T)}$ over all $T\in\calT_\xvec$. 

Combining (\ref{alltrees}), (\ref{nd}) and Lemma~\ref{prob}, 
for any suitable degree sequence $\xvec$ we have
\begin{align}
   \E \tau_\dvec(\xvec) &= e^{O(\dmax^4/((d-2)n))}
      \frac{ (dn/2)_{n-1} \,2^{n-1}\, \prod_{j=1}^n d_j}
                  {  (dn)_{2n-2} }\,
            \sum_{T\in\calT_\xvec}\, \biggl(\, \prod_{j=1}^n (d_j-1)_{x_j-1}
                e^{f(\xvec) - \mu(T)} \biggr) \nonumber \\
       &= e^{O(\dmax^4/((d-2)n))}
      \frac{ (dn/2)_{n-1} \,2^{n-1}\, \hat{d}^n} 
                  {  (dn)_{2n-2} } \, (n-2)!\,
            \biggl(\, \prod_{j=1}^n \binom{d_j-1}{x_j-1} \biggr)
                  e^{f(\xvec)}\, \eg(\xvec). \label{one-x}
\end{align}
Now define 
\begin{equation}
\label{mubar-def}
 \barg(\xvec) = \frac{1}{|\calT_{\xvec}|}\, \sum_{T\in\calT_{\xvec}} \mu(T),
\end{equation} 
the average value of $\mu(T)$ over $\calT_{\xvec}$.
By proving that
$\eg(\xvec)$ is close to $e^{-\barg(\xvec)}$ for each suitable degree sequence 
$\xvec$, and evaluating $\barg(\xvec)$, we will establish the following.

\newpage   

\begin{thm}
Suppose that the conditions of Theorem~\ref{main-sparse} hold and that 
$\xvec$ is a suitable degree sequence.  
Then with $\eta$, $R$ and $\Hd$ as defined as in Theorem~\ref{main-sparse},
\begin{align*}
 \E \tau_{\dvec}(\xvec) 
= \Hd\, \binom{(d-1)n}{n-2}^{\!-1}
&\biggl(\,\prod_{j=1}^n \binom{d_j-1}{x_j-1} \biggr)
  \,\exp\biggl( 
\frac{(R+d^2)^2}{4d^2} - \dfrac{1}{4} - \lambda(\xvec) - \lambda(\xvec)^2 \\
&{\kern5em}- \frac{1}{n}\sum_{j=1}^n (x_j-1)(d_j-x_j)
    + O\left(\frac{\dmax^4}{(d-2) n} + \eta \right)\biggr).
      \end{align*}
      \label{just-one-x}
\end{thm}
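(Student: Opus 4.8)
The plan is to estimate separately the three nontrivial factors in the identity~\eqref{one-x} — the scalar $e^{f(\xvec)}$, the tree average $\eg(\xvec)$, and the combinatorial prefactor $\frac{(dn/2)_{n-1}\,2^{n-1}\,\hat d^n\,(n-2)!}{(dn)_{2n-2}}$ — and then multiply the results, absorbing all errors into $O(\dmax^4/((d-2)n)+\eta)$. The factor $f(\xvec)$ is in fact exact: since $\sum_{j=1}^n(d_j)_2=\sum_jd_j^2-\sum_jd_j=n(R+d^2-d)$ we have $\lambda_0=\frac{R+d^2-d}{2d}$, hence $\lambda_0+\lambda_0^2=\frac{(R+d^2)^2-d^2}{4d^2}=\frac{(R+d^2)^2}{4d^2}-\frac14$, so by~\eqref{fg-def}, $f(\xvec)=\frac{(R+d^2)^2}{4d^2}-\frac14-\lambda(\xvec)-\lambda(\xvec)^2$; this is precisely the part of the claimed exponent that does not arise from a sum over trees.

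Next I would handle $\eg(\xvec)=|\calT_\xvec|^{-1}\sum_{T\in\calT_\xvec}e^{-\mu(T)}$. The crucial ingredient is a concentration estimate for $\mu(T)$ over a uniformly random tree $T\in\calT_\xvec$ — the Pr\"ufer-code bound flagged in the abstract, which I take as established separately — of the form $\eg(\xvec)=e^{-\barg(\xvec)+O(\eta)}$, where $\barg(\xvec)=\E\,\mu(T)$. It then remains to evaluate $\barg(\xvec)$, for which I would use the identity
\[
   \Prob[\{i,j\}\in E(T)]=\frac{x_i+x_j-2}{n-2}\qquad(i\ne j)
\]
for a uniformly random $T\in\calT_\xvec$; this follows from~\eqref{alltrees}, since the number of trees in $\calT_\xvec$ through $\{i,j\}$ equals $\binom{x_i+x_j-2}{x_i-1}$ times the number of trees on $n-1$ vertices obtained by contracting $\{i,j\}$ into one vertex of degree $x_i+x_j-2$, and dividing by $|\calT_\xvec|$ collapses to the stated value. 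Writing $y_j=d_j-x_j$, so $\sum_jy_j=(d-2)n+2$, and using $\mu(T)=\frac{1}{(d-2)n+2}\sum_{\{i,j\}\in E(T)}y_iy_j$, linearity of expectation and the identity $\sum_{i\ne j}(x_i-1)y_iy_j=\sum_j(x_j-1)y_j\bigl((d-2)n+2-y_j\bigr)$, one obtains
\[
   \barg(\xvec)=\frac{1}{n-2}\sum_{j=1}^n(x_j-1)(d_j-x_j)-\frac{1}{\bigl((d-2)n+2\bigr)(n-2)}\sum_{j=1}^n(x_j-1)(d_j-x_j)^2.
\]
Using $x_j\le\dmax$, $d_j-x_j\le\dmax$ and $\sum_jx_j=2n-2$, the second term is $O(\dmax^2/((d-2)n))$ and replacing $\frac1{n-2}$ by $\frac1n$ in the first costs $O(\dmax/n)$, both of which are at most $\dmax^4/((d-2)n)$; hence $\barg(\xvec)=\frac1n\sum_j(x_j-1)(d_j-x_j)+O(\dmax^4/((d-2)n))$.

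For the combinatorial prefactor I would write the falling factorials as ratios of factorials, using $\frac{dn}{2}-(n-1)=\frac{(d-2)n}{2}+1$ and $dn-(2n-2)=(d-2)n+2$, and divide by $\Hd\binom{(d-1)n}{n-2}^{-1}=\Hd\,(n-2)!\,((d-2)n+2)!/((d-1)n)!$. Once the common factors $\hat d^n$, $(n-2)!$ and $((d-2)n+2)!$ cancel, it remains to check
\[
   \frac{(dn/2)!\,((d-1)n)!\,2^{n-1}}{((d-2)n/2+1)!\,(dn)!}
   =\frac{(d-1)^{1/2}}{(d-2)^{3/2}\,n}\left(\frac{(d-1)^{d-1}}{d^{d/2}(d-2)^{d/2-1}}\right)^{\!n}\bigl(1+O(\dmax^4/((d-2)n))\bigr).
\]
This is a routine application of Stirling's formula $k!=\sqrt{2\pi k}\,(k/e)^ke^{O(1/k)}$ to the four factorials (with $\log((d-2)n/2+1)=\log((d-2)n/2)+O(1/((d-2)n))$, valid since $(d-2)n\ge\dmax^4$): the $n\log n$ terms cancel, the coefficient of $n$ assembles to $(d-1)\log(d-1)-\frac d2\log d-\frac{d-2}{2}\log(d-2)$, the remaining constant and logarithmic terms assemble to $\frac12\log(d-1)-\frac32\log(d-2)-\log n$, and the total Stirling error, dominated by $O(1/((d-2)n/2+1))=O(1/((d-2)n))$, is at most $\dmax^4/((d-2)n)$.

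Multiplying these estimates into~\eqref{one-x} then gives the theorem. The main obstacle is the concentration bound for $\mu(T)$: Jensen's inequality yields only $\eg(\xvec)\ge e^{-\barg(\xvec)}$, and without control of the fluctuations of $\mu(T)$ the average $\eg(\xvec)$ could be far larger than $e^{-\barg(\xvec)}$, so this is the step that genuinely requires the Pr\"ufer-code machinery; everything else above is either exact evaluation or routine Stirling bookkeeping.
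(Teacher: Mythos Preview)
Your outline matches the paper's proof almost exactly: rewrite the combinatorial prefactor, identify it with $\Hd\binom{(d-1)n}{n-2}^{-1}$ by Stirling, evaluate $\lambda_0+\lambda_0^2$ exactly, compute $\barg(\xvec)$ via the edge-probability $(x_i+x_j-2)/(n-2)$, and invoke the Pr\"ufer-code concentration for $\eg(\xvec)$. All those steps are carried out correctly.

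There is one genuine gap. You assert that the concentration estimate delivers $\eg(\xvec)=e^{-\barg(\xvec)+O(\eta)}$ with the full three-branch $\eta$ of Theorem~\ref{main-sparse}. It does not: the Pr\"ufer-code bound (the paper's Lemma~\ref{Eeg}) yields only the first two branches,
\[
   O\Bigl(\min\bigl\{\dmax^4/((d-2)^2 n),\; \dmax^3\log n/((d-2)n)\bigr\}\Bigr),
\]
and in the regime where $d-2$ is very small (say $\dmax$ bounded and $(d-2)n$ of order $\dmax^4$) these are both unbounded while $(d-2)\dmax$ is $o(1)$. The third branch comes from an argument you in fact have in hand but dismiss: since $\mu(T)\ge 0$ we have the trivial upper bound $\eg(\xvec)\le 1$, and together with the Jensen lower bound $\eg(\xvec)\ge e^{-\barg(\xvec)}$ this gives $0\le \log\eg(\xvec)+\barg(\xvec)\le \barg(\xvec)$. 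Your own computation shows $\barg(\xvec)=\frac1n\sum_j(x_j-1)(d_j-x_j)+O(\dmax^2/((d-2)n))$; bounding $x_j-1\le\dmax$ and $\sum_j(d_j-x_j)=(d-2)n+2$ gives $\barg(\xvec)=O\bigl((d-2)\dmax+\dmax^2/((d-2)n)\bigr)$, hence
\[
   \eg(\xvec)=\exp\bigl(-\barg(\xvec)+O((d-2)\dmax)+O(\dmax^2/((d-2)n))\bigr).
\]
Taking the minimum of this and the concentration bound then yields the full $O(\eta)$. Without this step your argument proves the theorem only with $\eta$ replaced by the two-branch minimum, which is strictly weaker when $d$ is close to~$2$.
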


The structure of the rest of the paper is as follows.
In Section~\ref{s:tree-averaging} we generalise the function~$\mu$
and prove a concentration result for trees with
given degrees.  This proof will involve a martingale concentration
result of McDiarmid~\cite{McDiarmidChapter} which we discuss in Section~\ref{s:concentration}.
The results of Section~\ref{s:tree-averaging} are applied in
Section~\ref{s:completing-calculations} to prove that the average of $e^{-\mu(T)}$
over $T\in\calT_{\xvec}$ is close to $\exp(-\barg(\xvec))$, and hence to prove
Theorem~\ref{just-one-x},
for any suitable degree sequence $\xvec$.
Finally, Theorem~\ref{main-sparse} is proved in Section~\ref{s:final-approx}.

Before we begin, note that we use the following conventions in our summation notation: $\sum_{i\neq j}$ will always denote a sum over all ordered pairs $(i,j)$ with $i\neq j$ (over some appropriate range which will be clear from the context, usually $i,j = 1,\ldots, n$).
On the other hand, if $i$ is fixed and we wish to sum over all $j\neq i$
(for example, over all $j\in\{ 1,2,\ldots, n\}\setminus i$) then we will write
$\sum_{j: j\neq i}$.  

\section{Concentration results}\label{s:concentration}

Let  $\mathcal{P}=(\varOmega,\calF,\mathbb{P})$ be a finite probability space.
A sequence $\calF_0,\ldots,\calF_n$ of $\sigma$-subfields
of $\calF$ is a \textit{filter} if $\calF_0\subseteq\cdots\subseteq\calF_n$.   A sequence  $Y_0,\ldots,Y_n$  
of random variables on $\mathcal{P}$ is a
\textit{martingale with respect to $\calF_0,\ldots,\calF_n$} if
\begin{itemize}\itemsep=0pt
\item[(i)] $Y_j$ is $\calF_j$-measurable and has finite expectation, 
for $j=0,\ldots, n$;
\item[(ii)] $\E(Y_j \st \calF_{j-1}) = Y_{j-1}$ for $j=1,\ldots, n$.
\end{itemize}
An important example of a martingale is made by the so-called
\textit{Doob martingale process}.
Suppose $X_1,X_2,\ldots,X_n$ are random variables on $\mathcal{P}$ and
$f(X_1,X_2,\ldots,X_n)$ is a random variable on $\mathcal{P}$ of bounded expectation.
Let $\sigma(X_1,\ldots, X_j)$ denote the $\sigma$-field generated
by the random variables $X_1,\ldots, X_j$. 
Define the martingale 
$\{Y_j\}$ with respect to the filter $\{\calF_j\}$, where for each~$j$,
$\calF_j=\sigma(X_1,\ldots,X_j)$ and
$Y_j=\E(f(X_1,X_2,\ldots,X_n)\st \calF_j)$.
In particular, $\calF_0=\{\emptyset,\varOmega\}$ and
$Y_0=\E f(X_1,X_2,\ldots,X_n)$.

In this section we state some concentration results for martingales.
See McDiarmid~\cite{McDiarmidChapter} for further background and for any definitions not 
given here.
Following McDiarmid~\cite{McDiarmidChapter}, for $j=1,\ldots, n$ 
we define the \emph{conditional range} of $Y_j$ as
\begin{equation} 
  \operatorname{ran}(Y_j\mid \calF_{j-1}) = 
  \operatorname{ess\, sup}(Y_j\mid \calF_{j-1}) +
  \operatorname{ess\, sup}(- Y_j\mid \calF_{j-1}).
\label{con-range}
\end{equation}
Here ``essential supremum'' may be replaced by ``supremum'', as in~\cite{McDiarmidChapter}, if the
probability distribution is positive over $\varOmega$.

Our main tool is the following result from McDiarmid~\cite{McDiarmidChapter}.
The tail bound on the probability is given by~\cite[Theorem 3.14]{McDiarmidChapter}.
The upper estimate on the moment generating function $\E(e^{h Y_n})$
is an intermediate step of McDiarmid's proof, see~\cite[Section 3.5]{McDiarmidChapter}.
The lower bound on $K$ is due to Jensen's inequality. 

\begin{thm} \emph{(\cite{McDiarmidChapter})}\
Suppose that $\mathcal{P}=(\varOmega,\calF,\mathbb{P})$ is a finite probability space. 
Let $Y_0,Y_1,\ldots, Y_n$ be a martingale  on 
 $\mathcal{P}$ with respect to a filter 
$\calF_0,\calF_1\ldots,\calF_n$, 
where $\calF_0 = \{\emptyset,\varOmega\}$,
such that
\[ \sum_{j=1}^n \left( \operatorname{ran}(Y_j\mid \calF_{j-1})\right)^2
                            \leq \hat{r}^2\quad a.s.\
\]
for some real $\hat{r}$.  Then
\[ \E e^{Y_n}  = e^{Y_0 + K}
\]
where $0\leq K\leq \nfrac{1}{8}\, \hat{r}^2$.
Furthermore, for any real $t > 0$,
\[ \Pr(|Y_n - Y_0| \geq t) \leq 2\, \exp( -2t^2/\hat{r}^2).\]
\label{martingales}
\end{thm}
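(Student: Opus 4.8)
The plan is to run the classical Chernoff/supermartingale argument built on a conditional form of Hoeffding's lemma. First I would pass to the martingale differences $D_j = Y_j - Y_{j-1}$, so that $Y_n - Y_0 = \sum_{j=1}^n D_j$; since $\calF_0 = \{\emptyset,\varOmega\}$ the variable $Y_0$ is almost surely constant, and the martingale property gives $\E(D_j \mid \calF_{j-1}) = 0$. The crucial observation is that, because $Y_{j-1}$ is $\calF_{j-1}$-measurable, the conditional range $r_j := \operatorname{ran}(Y_j \mid \calF_{j-1})$ coincides with $\operatorname{ran}(D_j \mid \calF_{j-1})$ and is itself an $\calF_{j-1}$-measurable random variable (a difference of conditional essential suprema). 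On a finite probability space these essential suprema are genuine maxima over the atoms of $\calF_{j-1}$, so no measure-theoretic delicacy arises.

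Next I would establish the conditional Hoeffding bound: for every real $h$,
\[ \E(e^{h D_j} \mid \calF_{j-1}) \leq \exp\bigl(\nfrac{1}{8}\, h^2 r_j^2\bigr) \quad \text{a.s.} \]
Working atom by atom, $D_j$ conditioned on a fixed atom of $\calF_{j-1}$ is a mean-zero variable supported in an interval of length $r_j$, and the usual Hoeffding lemma (convexity of $x\mapsto e^{hx}$ followed by optimisation of the resulting quadratic exponent) gives exactly this estimate.

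With this in hand, the moment generating function bound follows from a supermartingale trick. Define
\[ W_j = \exp\Bigl( h \sum_{i=1}^j D_i - \nfrac{1}{8}\, h^2 \sum_{i=1}^j r_i^2 \Bigr), \qquad W_0 = 1. \]
Since $r_j$ is $\calF_{j-1}$-measurable, factoring out the $\calF_{j-1}$-measurable part and applying the conditional Hoeffding bound gives $\E(W_j \mid \calF_{j-1}) \leq W_{j-1}$, so $(W_j)$ is a supermartingale and $\E W_n \leq W_0 = 1$. Using $\sum_{i=1}^n r_i^2 \leq \hat r^2$ a.s.\ to control the deterministic correction, this yields $\E e^{h(Y_n - Y_0)} \leq \exp(\nfrac{1}{8}\, h^2 \hat r^2)$ for every real $h$. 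Taking $h = 1$ proves $\E e^{Y_n} \leq e^{Y_0 + \hat r^2/8}$, giving $K \leq \nfrac{1}{8}\hat r^2$; the lower bound $K \geq 0$ is immediate from Jensen's inequality, since $\E Y_n = Y_0$.

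Finally, for the tail bound I would apply Markov's inequality to $e^{h(Y_n - Y_0)}$ for $h > 0$, obtaining $\Pr(Y_n - Y_0 \geq t) \leq \exp(-ht + \nfrac{1}{8}h^2\hat r^2)$, and optimise over $h$ by choosing $h = 4t/\hat r^2$ to reach $\exp(-2t^2/\hat r^2)$. Replacing the martingale $(Y_j)$ by $(-Y_j)$, which has the same conditional ranges, handles the lower tail, and a union bound over the two one-sided estimates produces the claimed factor of $2$. The only genuine obstacle is the conditional Hoeffding step: one must verify that the conditional range is $\calF_{j-1}$-measurable so that it can be treated as a constant inside each conditional expectation, which is precisely what makes the supermartingale $(W_j)$ telescope.
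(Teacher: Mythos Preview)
Your outline is correct and is exactly the classical argument. Note, however, that the paper does not give its own proof of this theorem: it simply cites McDiarmid~\cite{McDiarmidChapter}, pointing to \cite[Theorem~3.14]{McDiarmidChapter} for the tail bound, to \cite[Section~3.5]{McDiarmidChapter} for the moment-generating-function estimate (which is an intermediate step in McDiarmid's proof), and to Jensen's inequality for the lower bound $K\geq 0$. The Chernoff/supermartingale route you sketch, built on the conditional Hoeffding lemma, is precisely what McDiarmid does there, so your proposal faithfully reconstructs the cited argument.
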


As a corollary, we obtain a concentration result for functions of sets of a given
size.

\begin{cor}\label{subsets}
Let $\binom{[N]}{r}$ be the set of $r$-subsets of
$\{1,\ldots,N\}$ and let $h:\binom{[N]}{r}\to \Reals$ be given.
Let $C$ be a uniformly random element of $\binom{[N]}{r}$.
Suppose that there exists $\alpha\geq 0$ such that
\[
 \abs{h(A)-h(A')} \le \alpha
\]
for any $A, A' \in \binom{[N]}{r}$ with $|A \cap A'| = r-1$.
Then
\begin{equation}
  \E e^{h(C)} = \exp\left(\E h(C)+ K\right)
\label{henry}
\end{equation}
where $K$ is a real constant such that $0 \leq K \leq \tfrac18 \min\{r,N-r\} \alpha^2 $.
Furthermore, for any real $t>0$,
\begin{equation*}
{\rm Pr}( |h(C) - \E h(C)| \geq t) \leq 2 \exp\left( -\frac{2t^2}{\min\{r,N-r\} \alpha^2}\right). 
\end{equation*}
\end{cor}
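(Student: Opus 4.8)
The plan is to realise $h(C)$ as the final term $Y_n$ of a Doob martingale and apply Theorem~\ref{martingales}, so the whole task reduces to bounding the conditional ranges. I would construct the random $r$-subset $C$ by exposing its elements one at a time: sample a uniformly random permutation $(\pi_1,\ldots,\pi_N)$ of $[N]$ and set $C=\{\pi_1,\ldots,\pi_r\}$, which is indeed uniform on $\binom{[N]}{r}$. Let $\calF_j=\sigma(\pi_1,\ldots,\pi_j)$ for $j=0,\ldots,N$ (with $\calF_0=\{\emptyset,\varOmega\}$), and put $Y_j=\E\bigl(h(C)\mid\calF_j\bigr)$. This is a Doob martingale of length $N$; note $Y_j$ already stabilises for $j\ge r$ in one direction but it is cleaner to run the filter to length $N$ and bound all $N$ conditional ranges, or alternatively to run only to length $r$ — I will take whichever gives the sharper constant (see below).

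The key step is the claim that $\operatorname{ran}(Y_j\mid\calF_{j-1})\le\alpha$ for every relevant $j$, and that the number of indices $j$ for which this range can be nonzero is at most $\min\{r,N-r\}$. For the range bound: conditioned on $\calF_{j-1}$, the value $\pi_j$ is uniform over the not-yet-revealed labels, and $Y_j$ is the corresponding conditional expectation of $h(C)$; comparing two choices $\pi_j=a$ versus $\pi_j=b$ amounts to comparing two conditional distributions of $C$ that can be coupled by swapping $a$ and $b$ in the remaining positions. Under this coupling, for $j\le r$ the two resulting sets either coincide or differ by exactly one element (one contains $a$, the other contains $b$, everything else equal), so by the hypothesis the integrands differ by at most $\alpha$, whence $|Y_j^{(a)}-Y_j^{(b)}|\le\alpha$ and the conditional range is at most $\alpha$. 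For $j>r$ the set $C=\{\pi_1,\ldots,\pi_r\}$ is already determined by $\calF_{j-1}$, so $Y_j=Y_{j-1}$ and the range is $0$; hence at most $r$ terms contribute. Symmetrically, revealing the complement $[N]\setminus C$ in the last $N-r$ steps shows at most $N-r$ terms contribute, so in all $\sum_j(\operatorname{ran}(Y_j\mid\calF_{j-1}))^2\le\min\{r,N-r\}\,\alpha^2=:\hat r^2$.

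With this $\hat r^2$ in hand, Theorem~\ref{martingales} applies verbatim: since $Y_0=\E h(C)$ and $Y_n=h(C)$ (taking $n=N$, or $n=r$ after the reduction), we get $\E e^{h(C)}=e^{\E h(C)+K}$ with $0\le K\le\tfrac18\hat r^2=\tfrac18\min\{r,N-r\}\alpha^2$, which is \eqref{henry}, and the tail bound $\Pr(|h(C)-\E h(C)|\ge t)\le 2\exp(-2t^2/(\min\{r,N-r\}\alpha^2))$ follows immediately from the tail statement of Theorem~\ref{martingales}. I expect the only real obstacle to be the swapping/coupling argument that establishes $\operatorname{ran}(Y_j\mid\calF_{j-1})\le\alpha$ — one must be careful that the coupling genuinely produces two sets differing in at most one element (this is where the hypothesis is stated precisely for $|A\cap A'|=r-1$) and that conditioning on $\calF_{j-1}$ is handled correctly; the rest is bookkeeping and a direct invocation of Theorem~\ref{martingales}.
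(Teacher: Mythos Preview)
Your proposal is correct and follows essentially the same route as the paper: construct $C$ from the first $r$ entries of a uniformly random permutation, form the Doob martingale, bound each conditional range by $\alpha$, note the ranges vanish beyond step $r$, apply Theorem~\ref{martingales}, and then handle $N-r$ by complementation. The only difference is that the paper outsources the conditional-range bound to \cite[Lemma~11]{FP}, whereas you sketch the swapping coupling directly; your coupling is exactly the right one (swapping $a$ and $b$ in positions $j,\ldots,N$ yields two $r$-sets that either coincide or differ in one element), so nothing is missing.
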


\begin{proof}
Let $S_N$ denote the set of permutations of $\{1,\ldots, N\}$
and $\tau = (\tau_1,\ldots, \tau_N)$ be a uniform random element of $S_N$.  
Note that the set $\{\tau_{1}, \ldots, \tau_r\}$ is a uniformly random 
element of $\binom{[N]}{r}$. 
Define $\tilde{h}:S_N \to \mathbb{R}$ by
$\tilde{h}(\omega) = h(\{\omega_{1}, \ldots, \omega_r\})$
for all $\omega\in S_N$.  
Then
\[  |\tilde{h}(\rho) - \tilde{h}(\rho')| \leq \alpha\]
for all permutations $\rho$, $\rho'\in S_N$ such that
$\rho^{-1}\rho'$ is a transposition.  
Given $\omega = (\omega_1,\ldots,\omega_N) \in S_N$, for $k=0,\ldots, N$
let
\[ \tilde{h}_k(\omega) = 
    \E\left(\tilde{h}(\tau) \st \tau_j=\omega_j \, \text{ for } \, j=1,\ldots k
  \right).\]
Clearly, $\tilde{h}_0(\tau), \ldots, \tilde{h}_N(\tau)$ forms a martingale: it is the result of the Doob martingale process for $\tilde{h}(\tau)$.
It follows from Frieze and Pittel~\cite[Lemma 11]{FP} that
\[ \operatorname{ran}\(\tilde{h}_k(\tau) \mid \sigma(\tau_1,\ldots,   \tau_{k-1}) \) \leq \alpha.
\]
Moreover, for any $\omega \in S_N$ and $k\in\{ r,\ldots, N\}$, we have 
 $$
 	\E\(\tilde{h}(\tau) \st \tau_j=\omega_j, \ 1\leq j\leq k\) = h(\{\omega_{1}, \ldots, \omega_r\}).
 $$ 
Therefore 
$\operatorname{ran}\(\tilde{h}_k(\tau) \mid \sigma(\tau_1,\ldots,   \tau_{k-1}) \)  = 0$
for all $k > r$.
Applying Theorem~\ref{martingales} to the martingale $\tilde{h}_0(\tau),\ldots, \tilde{h}_N(\tau)$, 
we conclude that (\ref{henry}) holds with 
$0\leq K \leq \tfrac18 r \alpha^2$,  and that
\[ {\rm Pr}( |h(C) - \E h(C)| \geq t) \leq 
  2 \exp\left( -\frac{2 t^2}{r \alpha^2}\right).\] 
If $r\leq N-r$ then we are finished. Otherwise we repeat the above argument using the 
bijection between subsets and their complements.   
\end{proof}

\section{Trees with given degrees}\label{s:tree-averaging}

In this section we consider sums of the form
\begin{equation}
\label{Fdef}
 F(T) = \sum_{\{ j,k\} \in E(T)}\phi(j)\, \phi(k)
\end{equation}
for a given function $\phi:\{ 1,2,\ldots, n\}\rightarrow [\a, \b]\subset \mathbb{R}$. 

Let $\bar{F}(\xvec)$ be the average value of $F$ over all trees with
a given degree sequence $\xvec$:
\[ \bar{F}(\xvec) = \frac{1}{|\calT_{\xvec}|}\, \sum_{T\in\calT_{\xvec}} F(T).\]
The goal of this section is to prove the following theorem, showing that the 
average of $e^{\xi  F(T)}$ over $\calT_{\xvec}$
is close to $e^{\xi \bar{F}(\xvec)}$, for $\xi\in \{ -1,1\}$. 
We will measure this distance using the seminorm of $\phi$ given by
\begin{equation}
\label{phi-norm}
 \| \phi\|_m = \min_{c\in \mathbb{R}}\, \sum_{j=1}^n |\phi(j) - c |.
\end{equation}
Here the minimising value of $c$ is any median of $\{ \phi(j) : j = 1,\ldots, n\}$.

\begin{thm} Let $F$ satisfy \emph{(\ref{Fdef})}.  Then for any tree degree sequence $\xvec$
and for $\xi\in\{-1,1\}$,
\[
\frac{1}{|\calT_{\xvec}|}\, \sum_{T\in\calT_{\xvec}}\, e^{\xi F(T)}  =\exp\left(\xi \bar{F}(\xvec)+ K\right)
\]
for some real constant $K$ which satisfies 
$0\leq K\leq \nfrac{1}{8} L_\phi$, where
\[ 
  L_\phi =  
    (\b - \a)^3\, \min\big\{ (\b- \a) n,\,\,  \| \phi\|_m\, (\ln n + 2)\big\}.
\]
Furthermore, if $\widehat{T}$ is a uniformly random element of $\calT_{\xvec}$
then for any real constant $t>0$,
\[ \Pr( |F(\widehat{T}) - \bar{F}(\xvec)| > t) 
            \leq 2\,\exp\left( - 2 t^2/L_\phi \right). \]
\label{tree-general}
\end{thm}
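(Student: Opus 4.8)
The plan is to realize a uniformly random tree $\widehat{T}\in\calT_{\xvec}$ as a deterministic function of its Pr\"ufer code and then apply the permutation-based concentration result of Corollary~\ref{subsets}. Recall that the Pr\"ufer correspondence is a bijection between $\calT$ and sequences in $\{1,\ldots,n\}^{n-2}$, under which a tree has degree sequence $\xvec$ precisely when vertex $j$ appears exactly $x_j-1$ times in the code. So a uniformly random $\widehat{T}\in\calT_{\xvec}$ corresponds to a uniformly random arrangement of a fixed multiset $M$ (containing $x_j-1$ copies of the label $j$, for each $j$) in the $n-2$ positions; equivalently, writing $N=n-2$ and listing the multiset elements as $m_1,\ldots,m_N$, the random code is $(m_{\pi(1)},\ldots,m_{\pi(N)})$ for a uniform permutation $\pi\in S_N$. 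This is exactly the setting of the proof of Corollary~\ref{subsets} (a uniform random ordering of a ground set), so I would prove a very minor variant of that corollary: if $\widetilde{h}:S_N\to\Reals$ satisfies $|\widetilde h(\rho)-\widetilde h(\rho')|\le\alpha$ whenever $\rho^{-1}\rho'$ is a transposition, then $\E e^{\widetilde h(\pi)}=\exp(\E\widetilde h(\pi)+K)$ with $0\le K\le\tfrac18 N\alpha^2$ and the corresponding sub-Gaussian tail. (In fact Corollary~\ref{subsets} as stated suffices after noting $F(\widehat T)=h(C)$ does not quite hold since entries repeat; cleanest is to invoke the $S_N$ martingale directly, as in that proof.)

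The heart of the matter is then a Lipschitz estimate: how much can $F(T)$ change when we swap two entries of the Pr\"ufer code? I would show that transposing the contents of two code positions changes $F(T)$ by at most $O((\b-\a)^2)$, and more precisely by at most a quantity controlled by $(\phimax-\a)$ so that summing over the $N$ martingale steps yields $L_\phi$. Concretely: deleting one entry of the Pr\"ufer code and re-inserting a different label changes the tree by removing one edge and adding one edge (when one carefully tracks the Pr\"ufer decoding); since $F$ is a sum of edge terms $\phi(j)\phi(k)\in[\a^2,\b^2]$, a single edge change alters $F$ by at most $\b^2-\a^2\le (\b-\a)\cdot 2\b$. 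Transposing two entries is two such surgeries, so $\alpha=O((\b-\a)\b)$. This already gives the bound $K\le\tfrac18 c(\b-\a)^2\b^2 n$; to get the stated $(\b-\a)^3\cdot(\b-\a)n$ form and the refined $(\b-\a)^3\|\phi\|_m(\ln n+2)$ bound, the key observation is that $F$ is invariant under replacing $\phi$ by $\phi-c$ for any constant $c$ \emph{up to} an affine correction by lower-order sums --- more carefully, $F(T)=\sum_{\{j,k\}}(\phi(j)-c)(\phi(k)-c)+c\sum_j(x_j-1)\phi(j)-c^2(n-1)$ using $\sum_{\{j,k\}\in E(T)}1=n-1$ and the handshake identity, and the last two terms are \emph{constant over $\calT_{\xvec}$}. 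Hence $F(T)$ differs from $\widetilde F(T):=\sum_{\{j,k\}}(\phi(j)-c)(\phi(k)-c)$ by a constant, so $\E e^{\xi F}=e^{\xi\cdot\text{const}}\E e^{\xi\widetilde F}$ and the whole analysis may be run with $\phi$ replaced by $\phi-c$ for the optimal (median) choice of $c$. After recentering, the per-step Lipschitz constant is $O((\b-\a)\cdot\max_j|\phi(j)-c|)\le O((\b-\a)^2)$, giving the first branch of the min; for the second branch I would refine the step bound so that only positions whose label has large $|\phi(j)-c|$ contribute, and the number of such positions in a random code is governed by $\sum_j(x_j-1)|\phi(j)-c|$, which after a union/worst-case bound over the martingale filtration produces the $\|\phi\|_m(\ln n+2)$ factor (the logarithm coming from a maximal-inequality / tail-integration step over the at most $N<n$ filtration levels).

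The main obstacle I anticipate is precisely this refined second branch $\|\phi\|_m(\ln n+2)$: obtaining it requires a genuinely position-dependent bound on the conditional ranges $\operatorname{ran}(\widetilde F_k\mid\calF_{k-1})$ rather than a single uniform $\alpha$, and then feeding non-uniform step bounds into a Hoeffding/Azuma-type argument --- Theorem~\ref{martingales} as stated wants $\sum_k(\operatorname{ran})^2\le\hat r^2$ a.s., so I must bound this sum a.s. over all codes, not just in expectation, which is where the $\ln n$ slack enters (bounding a sum of squared ranges that is itself a random quantity concentrated around its mean). Establishing the clean "remove one edge, add one edge" description of how the decoded tree responds to a single Pr\"ufer transposition, with full rigor, is the other fiddly point; I would prove it by induction on the standard Pr\"ufer decoding algorithm, tracking that a swap of two symbols changes the sequence of "smallest available leaf" choices in a bounded way. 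Everything else --- plugging into Corollary~\ref{subsets}/Theorem~\ref{martingales}, handling $\xi=-1$ by the same bound since the Lipschitz hypothesis is symmetric in sign, and the constant-shift bookkeeping --- is routine.
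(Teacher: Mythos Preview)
Your central Lipschitz claim is false: transposing two entries of the Pr\"ufer code does \emph{not} change the decoded tree by a bounded number of edges. Take $n=6$ and the path with edges $\{1,2\},\{2,3\},\ldots,\{5,6\}$, whose code is $(2,3,4,5)$; swapping the first and last entries gives $(5,3,4,2)$, which decodes to the path with edges $\{1,5\},\{5,3\},\{3,4\},\{4,2\},\{2,6\}$, sharing only the edge $\{3,4\}$ with the original. In general a single transposition can alter $\Theta(n)$ edges, because the leaf sequence $a_1,a_2,\ldots$ produced by the decoding depends on the \emph{suffix} of the code, and changing one early entry can cascade through all later choices of ``smallest available leaf''. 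Consequently your bound $\alpha=O((\b-\a)^2)$ on the per-step change of $F$ does not hold, and the route through Corollary~\ref{subsets}/Frieze--Pittel collapses: those results bound the conditional range of the Doob martingale by the \emph{global} Lipschitz constant of $\widetilde h$ under transpositions, and that constant is not what you need it to be.

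The paper avoids this by never bounding $|F(T_1)-F(T_2)|$ directly. It works with the Doob martingale $Y_j=\E\bigl(F(\widehat{T})\mid b_1,\ldots,b_j\bigr)$ and computes each $Y_j$ \emph{explicitly}. Conditioning on $b_1,\ldots,b_j$ fixes the first $j$ edges $\{a_i,b_i\}$ (Lemma~\ref{determined}: $a_1,\ldots,a_j$ are determined by $\xvec$ and $b_1,\ldots,b_{j-1}$), and the remainder is a uniformly random tree on $n-j$ vertices with the residual degree sequence~$\yvec_s$. Hence $Y_{j,s}$ equals a fixed partial sum plus $\bar F(\yvec_s)$ on the smaller vertex set, and the closed form for $\bar F$ in Lemma~\ref{tree-prob}(ii) yields
\[
  Y_{j,1}-Y_{j,2}
  = \frac{\phi(b_j(T_1))-\phi(b_j(T_2))}{n-j-2}\,
    \sum_{\ell\in U_j(T_1,T_2)}\bigl(\phi(a_j)-\phi(\ell)\bigr).
\]
This difference of \emph{conditional expectations} is genuinely $O((\b-\a)^2)$ even though $|F(T_1)-F(T_2)|$ need not be. The refined $\|\phi\|_m(\ln n+2)$ branch then falls out of this exact formula: bound one factor by $(\b-\a)$ and the other by $|\phi(a_j)-c|+\|\phi\|_m/(n-j-2)$; summing over $j$ gives at most $\|\phi\|_m$ from the first part (each vertex is some $a_j$ at most once) plus a harmonic sum from the second. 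Your recentring observation that $F$ and $\widetilde F$ differ by a constant on $\calT_{\xvec}$ is correct and pleasant, but it does not repair the Lipschitz step; the missing idea is to \emph{compute}, not merely bound, the conditional expectations.
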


First we give some explicit formulae which we will need later.

\begin{lemma}\label{tree-prob}
 Let $\xvec$ be a tree degree sequence and consider the set
 $\cal{T}_\xvec$ of all trees with degree sequence~$\xvec$.
 \begin{itemize}
\item[\emph{(i)}]
Let $S$ be a disconnected forest with vertex set $\{1,\ldots,n\}$ and
degree sequence $(s_1,\ldots,s_n)$, where $s_j\le x_j$ for $j = 1,\ldots, n$.
Let $S_1,\ldots,S_r$ be the components of~$S$.
Then the probability that a uniform random tree in $\calT_\xvec$
contains $S$ is
\[
 \frac{ \prod_{i=1}^r \sum_{j\in V(S_i)} (x_j-s_j)}{(n-2)_{n-r}}
 \prod_{j=1}^n \, (x_j-1)_{s_j-1},
\]
where $(x_j-1)_{s_j-1}=x_j^{-1}$ if $s_j=0$.
In particular, for distinct $j,k\in \{1,2,\ldots, n\}$, the fraction of trees in $\calT_\xvec$ in which vertices $j,k$ are adjacent is
 \[ 
\frac{x_j+x_k-2}{n-2}.\]
\item[\emph{(ii)}] The average value of $F$ over $\calT_{\xvec}$ is
\[ \bar F(\xvec) = \frac{1}{n-2}\, \left(\sum_{k=1}^n\phi(k)\right)\left(\sum_{j=1}^n (x_j-1)\, \phi(j)\right) -
  \frac{1}{n-2}\, \left(\sum_{j=1}^n (x_j-1)\, \phi(j)^2\right).
\]
\end{itemize}
\end{lemma}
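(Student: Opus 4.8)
The plan is to prove Lemma~\ref{tree-prob} by combining the classical formula~\eqref{alltrees} for $|\calT_{\xvec}|$ with a counting argument for trees containing a prescribed forest, and then to derive part~(ii) by linearity of expectation from the ``two vertices adjacent'' special case of part~(i).

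\textbf{Part (i).} First I would count the trees in $\calT_{\xvec}$ that contain a fixed spanning forest $S$ with components $S_1,\dots,S_r$. The idea is to contract each component $S_i$ to a single super-vertex; a tree on $\{1,\dots,n\}$ containing $S$ corresponds to a tree on the $r$ super-vertices together with a choice, for each edge of that tree, of which actual endpoint in each relevant component is used. More precisely, if the contracted super-vertex for $S_i$ is assigned degree $t_i$ in the contracted tree (so $\sum_i t_i = 2r-2$), then each of the $t_i$ ``half-edges'' leaving $S_i$ must attach to one of the vertices $j \in V(S_i)$, and vertex $j$ can absorb exactly $x_j - s_j$ further incident edges. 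Summing the multinomial count $\binom{r-2}{t_1-1,\dots,t_r-1}$ (the number of trees on $r$ labelled super-vertices with degree sequence $(t_1,\dots,t_r)$, by~\eqref{alltrees}) against the number of ways to distribute the $t_i$ external edges among the vertices of $S_i$ respecting the residual capacities $x_j - s_j$, and then using the multinomial/Vandermonde-type identity $\sum \binom{t_i-1}{\cdots}\cdots = \bigl(\sum_{j\in V(S_i)}(x_j-s_j)\bigr)^{\!t_i-1}$ together with the internal arrangements $\prod_j (x_j-1)_{s_j-1}$ inside each component, collapses the whole sum to the stated closed form $\bigl(\prod_{i=1}^r \sum_{j\in V(S_i)}(x_j-s_j)\bigr)/(n-2)_{n-r} \cdot \prod_j (x_j-1)_{s_j-1}$. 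Dividing by $|\calT_{\xvec}| = \binom{n-2}{x_1-1,\dots,x_n-1}$ gives the probability. I would then check the special case: a single edge $\{j,k\}$ is a forest with $n-1$ components, one being the edge (degrees $1,1$) and the rest isolated vertices (degree $0$), and the formula telescopes to $(x_j+x_k-2)/(n-2)$ after the falling-factorial and capacity bookkeeping. It may be cleanest to first prove the edge case directly — it is exactly the expected value under the uniform model that each pair is adjacent with probability proportional to $x_j+x_k-2$ — and present the general forest formula as the natural induction/contraction generalization.

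\textbf{Part (ii).} Given the edge probability from part~(i), I would write $\bar F(\xvec) = \sum_{\{j,k\}} \phi(j)\phi(k)\,\Pr(\{j,k\}\in E(\widehat T)) = \frac{1}{n-2}\sum_{j<k}\phi(j)\phi(k)(x_j+x_k-2)$. Symmetrizing over ordered pairs $(j,k)$ with $j\neq k$ and using $x_j+x_k-2 = (x_j-1)+(x_k-1)$, this becomes $\frac{1}{n-2}\sum_{j\neq k}(x_j-1)\phi(j)\phi(k)$, which equals $\frac{1}{n-2}\bigl(\sum_j(x_j-1)\phi(j)\bigr)\bigl(\sum_k\phi(k)\bigr) - \frac{1}{n-2}\sum_j(x_j-1)\phi(j)^2$ after adding and subtracting the diagonal $j=k$ terms. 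This is exactly the claimed expression.

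\textbf{Expected main obstacle.} The genuinely delicate step is the combinatorial identity in part~(i): verifying that the sum over contracted-tree degree sequences of (number of super-trees) $\times$ (number of ways to place external edges into each component's residual capacity) simplifies to a clean product $\prod_i\bigl(\sum_{j\in V(S_i)}(x_j - s_j)\bigr)$ over the $(n-2)_{n-r}$ normalization. The cleanest route is probably a bijective or generating-function argument — e.g. a weighted version of the Prüfer-code / generalized Cayley formula $\sum_{\text{trees on }[r]} \prod_i w_i^{\deg_i - 1} = \bigl(\sum_i w_i\bigr)^{r-2}$ applied with $w_i = \sum_{j\in V(S_i)}(x_j-s_j)$ — rather than manipulating multinomial sums by hand. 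Handling the edge cases $s_j = 0$ (the convention $(x_j-1)_{-1} = x_j^{-1}$) and $r = 1$ needs a little care but is routine once the main identity is in place.
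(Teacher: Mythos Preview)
Your approach is essentially the paper's for both parts, but you are overcomplicating part~(i). The key observation you are missing is that once you contract each component $S_i$ to a super-vertex, the degree $t_i$ of that super-vertex in the contracted tree is \emph{forced} to equal $x'_i \coloneqq \sum_{j\in V(S_i)}(x_j - s_j)$: every vertex $j$ must have exactly $x_j - s_j$ edges leaving its component (not ``at most'' $x_j - s_j$, as your phrase ``can absorb'' suggests), since $j$ has $s_j$ edges inside $S$ and must end with total degree exactly $x_j$, and no edge of $T\setminus S$ can stay inside a component $S_i$ without creating a cycle. Consequently there is no sum over super-tree degree sequences $(t_1,\ldots,t_r)$ to collapse, and the weighted Cayley identity you flag as the ``expected main obstacle'' is never needed. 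The paper simply multiplies the number $\binom{r-2}{x'_1-1,\ldots,x'_r-1}$ of trees on $[r]$ with the fixed degree sequence $\xvec'$ by, for each $i$, the multinomial $x'_i!\big/\prod_{j\in V(S_i)}(x_j-s_j)!$ counting assignments of the $x'_i$ external edge-ends at $S_i$ to its vertices, and divides by $|\calT_{\xvec}|$; the stated formula drops out directly. Your part~(ii) argument matches the paper's exactly.
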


\begin{proof}
Define $\xvec' = (x'_1,\ldots,x'_r)$, where $x'_i = \sum_{j\in V(S_i)} (x_j - s_j)$
for $i=1,\ldots, r$.
If $x'_i=0$ for any $i$ then 
$T$ cannot contain $S$, as $S$ is disconnected. 
Hence the result holds trivially in that case, and for the remainder of the proof
we may assume that all entries of $\xvec'$ are positive.
Next, observe that the entries of $\xvec'$ sum to $2(r-1)$, and  hence $\xvec'$ is a tree
degree sequence. 

Each tree in $\calT_\xvec$ that contains $S$ can be formed uniquely by the following process:
\begin{enumerate}\itemsep=0pt
\item[(1)]
Take any tree $T'$ on the vertex set $\{1,\ldots,r\}$ with degree sequence
$\xvec'$.
\item[(2)] For $i=1,\ldots, r$, replace vertex $i$ of $T'$ by $S_i$ and distribute
the edges of $T'$ that were incident with~$i$
amongst the vertices of $S_i$, so that each vertex $j\in V(S_i)$ has degree $x_j$ in the resulting tree.
\end{enumerate}
By (\ref{alltrees}), the number of choices for $T'$ in Step~1 is
$\binom{r-2}{x'_1-1,\ldots,x'_r-1}$,
while the number of ways to distribute edges in Step~2 is
\[ \prod_{i=1}^r\, \frac{x'_i!}{\prod_{j\in V(S_i)} (x_j-s_j)!}.
\]
The first statement of (i) 
is proven by multiplying these expressions together, dividing by~\eqref{alltrees} and simplifying. 
Then taking $S$ to be the edge $jk$ together with $n-2$ trivial components
completes the proof of (i).

Now using linearity of expectation, (\ref{Fdef}), and part (i), we calculate that
\begin{align}
(n-2)\, \bar F(\xvec) &=  \sum_{j < k} \,(x_j+x_k-2)\, \phi(j)\phi(k) \label{fred0}\\
 &= \sum_{j\neq k} (x_j-1) \, \phi(j)\phi(k) \label{fred}\\
   &= \sum_{j=1}^n (x_j-1)\, \phi(j)\, \left(\left(\sum_{k=1}^n \phi(k)\right) - \phi(j)\right) \nonumber \\
       &=  \left(\left(\sum_{k=1}^n \phi(k)\right)\, \sum_{j=1}^n (x_j-1)\phi(j) \right)
          -  \left(\sum_{j=1}^n (x_j-1)\,\phi(j)^2\right),\nonumber
   \end{align}
establishing (ii).  
\end{proof}

We complete this section with the proof of Theorem~\ref{tree-general}, which
involves the process used to construct the \emph{Pr{\" u}fer code} of a labelled tree.
The Pr{\" u}fer code of a tree $T\in\calT$ is a sequence
$\boldsymbol{b} = (b_1,\ldots, b_{n-2})\in \{1,2,\ldots, n\}^{n-2}$.
Given~$T$, find the unique neighbour $b_1$ of the lowest-labelled leaf $a_1$. 
Then $b_1$ becomes the first entry in the Pr{\" u}fer code for $T$. 
We find the next entry recursively
by considering the tree $T - a_1$ with the first leaf deleted.  The process stops
when a single edge remains: this edge is determined by the
degree sequence and does not need to be recorded in the code $\boldsymbol{b}$.
We will refer to this process as the \emph{Pr{\" u}fer process} with input $T$. 
See Figure~\ref{f:prufer} for an example.
\begin{figure}[ht!]
\begin{center}
\begin{tikzpicture}
\draw [-] (0,1) -- (4,1);
\draw [-] (1,1) -- (1,2);
\draw [-] (2,1) -- (2,2);
\draw [fill] (0,1) circle (0.1);
\draw [fill] (1,1) circle (0.1);
\draw [fill] (1,2) circle (0.1);
\draw [fill] (2,1) circle (0.1);
\draw [fill] (2,2) circle (0.1);
\draw [fill] (3,1) circle (0.1);
\draw [fill] (4,1) circle (0.1);
\node [below] at (0,0.9) {3};
\node [below] at (1,0.9) {2};
\node [left] at (0.9,2) {6};
\node [below] at (2,0.9) {7};
\node [right] at (2.1,2) {4};
\node [below] at (3,0.9) {1};
\node [below] at (4,0.9) {5};
\node at (6,1) {\huge $\Rightarrow$};
\node at (8,1) {$(2,7,1,7,2)$};
\end{tikzpicture}
\caption{A tree and its corresponding Pr{\" u}fer code.}
\label{f:prufer}
\end{center}
\end{figure}
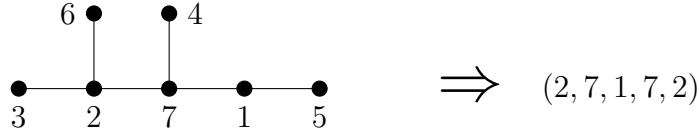
The correspondence between trees and Pr{\" u}fer codes is a bijection:
see for example Moon~\cite[pp.~5-6]{moon}. 
This provides a proof of Cayley's formula and of (\ref{alltrees}).

The following useful property of the Pr{\" u}fer process may be proved by induction on $j$.

\begin{lemma}
Let $\xvec$ be a tree degree sequence and let $T\in\calT_{\xvec}$. 
Suppose that the Pr{\" u}fer process with input $T$ produces
the Pr{\" u}fer code $\bvec$ and the sequence $(a_1,\ldots, a_{n-2})$
of ``leaves''.  For any $j = 1,\ldots, n-2$,
the initial sequence $(a_1,\ldots, a_j)$ is uniquely determined by $\xvec$
and $(b_1,\ldots, b_{j-1})$.
\label{determined}
\end{lemma}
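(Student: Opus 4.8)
The plan is to induct on $j$, showing that knowledge of the tree degree sequence $\xvec$ together with the first $j-1$ code entries $(b_1,\ldots,b_{j-1})$ suffices to reconstruct the first $j$ leaves $(a_1,\ldots,a_j)$. The base case $j=1$ is immediate: $a_1$ is the lowest-labelled leaf of $T$, and a vertex $v$ is a leaf of $T$ precisely when $x_v=1$, so $a_1 = \min\{v : x_v = 1\}$, which depends only on $\xvec$. For the inductive step, suppose $(a_1,\ldots,a_{j-1})$ has already been reconstructed from $\xvec$ and $(b_1,\ldots,b_{j-2})$. After $j-1$ steps of the Pr{\" u}fer process we are working with the subtree $T_j = T - \{a_1,\ldots,a_{j-1}\}$, and $a_j$ is by definition the lowest-labelled leaf of $T_j$. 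The key observation is that the degree of any vertex $v$ in $T_j$ equals $x_v$ minus the number of indices $i \le j-1$ with $b_i = v$ (each recorded entry $b_i$ is the neighbour in the current subtree of the deleted leaf $a_i$, so it loses exactly one from its degree at that step; the deleted vertices $a_i$ themselves are leaves when removed and hence play no further role). Thus for $v \notin \{a_1,\ldots,a_{j-1}\}$,
\[
\deg_{T_j}(v) = x_v - \#\{\, i \le j-1 : b_i = v \,\},
\]
and $v$ is a leaf of $T_j$ exactly when this quantity equals $1$. Since the right-hand side is computable from $\xvec$ and $(b_1,\ldots,b_{j-1})$ (and we know $\{a_1,\ldots,a_{j-1}\}$ by induction), we can identify the leaf set of $T_j$ and hence $a_j = \min\{\,v \notin \{a_1,\ldots,a_{j-1}\} : \deg_{T_j}(v) = 1\,\}$, completing the induction.

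The only slightly delicate point — and the step I would take most care over — is justifying the degree formula above, i.e.\ that removing the leaves $a_1,\ldots,a_{j-1}$ in order decrements the degree of $v$ exactly once for each recorded occurrence $b_i = v$ with $i \le j-1$, and never by the deletion of an $a_i$ itself. This is true because at the moment $a_i$ is removed it is a leaf of the current subtree, so its unique remaining neighbour is $b_i$ and no other edge at $v$ is affected unless $v = b_i$; moreover an already-deleted vertex cannot reappear as some later $b_{i'}$. One must also check that the process is well defined up to step $j$, i.e.\ that a leaf exists at each stage — but every finite tree on at least two vertices has a leaf, and the process only runs while more than one edge remains, so this is automatic. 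Everything else is bookkeeping, and the statement follows.
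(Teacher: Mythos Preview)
Your proof is correct and follows exactly the approach the paper indicates: the paper simply states that the lemma ``may be proved by induction on~$j$'' without giving details, and your argument is a correct and careful fleshing out of that induction.
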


When there is more than one tree under consideration we will write $a_j(T)$, 
$b_j(T)$ for the vertices identified at step $j$ of the Pr{\" u}fer process
for the tree $T$.  
To prove Theorem~\ref{tree-general} we work with a martingale defined using the Pr{\" u}fer code of a tree.  
A martingale construction based on the Pr{\" u}fer code was given by Cooper, McGrae and Zito~\cite{CMZ}, 
for all labelled trees.  Our martingale is restricted to trees with a given degree sequence and we study a
function for which it is more difficult to bound the conditional ranges.

\begin{proof}[Proof of Theorem~\ref{tree-general}.]\
Suppose that $T_1$ and $T_2$ are trees on $\{ 1,2,\ldots, n\}$ with the same degree sequence.
For $j=0,\ldots, n-3$, say that $T_1$ and $T_2$ are \emph{$j$-equivalent},
and write $T_1 \simj{j} T_2$, if $b_i(T_1) = b_i(T_2)$ for $i=1,2,\ldots, j$.
By Lemma~\ref{determined}, if $T_1\simj{j} T_2$ then
$a_i(T_1) = a_i(T_2)$ for $i=1,\ldots, j$. 
The $j$-equivalence relation induces a partition of $\calT_{\xvec}$ into
equivalence classes $C_{j,1},\ldots, C_{j,r_j}$, say, with
$\cup_{\ell=1}^{r_j} C_{j,\ell} = \calT_{\xvec}$.

Let $Y_{j,\ell}$ equal the average of $F(T)$ over $T\in C_{j,\ell}$, and define the
function $Y_j$ on $\calT_{\xvec}$ by $Y_j(T) = Y_{j,\ell}$ if $T\in C_{j,\ell}$.
Finally, define the random variable $Y_j = Y_j(\widehat{T})$,
where $\widehat{T}$ is a uniformly random element of $\calT_{\xvec}$.
Then $Y_0$ is the constant function which takes the value $\E F(\widehat{T})$ 
everywhere, and $Y_{n-3} = F(\widehat{T})$, since each equivalence class $C_{n-3,\ell}$ 
is a set of size 1.
Observe that $Y_0,\ldots, Y_{n-3}$ is a martingale with respect to the filter $\calF_0, \ldots, \calF_{n-3}$, 
where, for each $j$,   $\calF_j$ is generated by the sets $C_{j,1},\ldots, C_{j,r_j}$. In fact,
 this is the Doob martingale process for the function $F(\widehat{T})$ of the 
random variables $b_1(\widehat{T}),\ldots,b_{n-3}(\widehat{T})$, which 
determine $\widehat{T}$ uniquely.

To apply Theorem~\ref{martingales} we must calculate a value for $\hat{r}^2$.
Suppose that $T_1$ and $T_2$ are $(j-1)$-equivalent, where $T_1, T_2\in\calT_{\xvec}$ and $j\in \{1, \ldots, n-3\}$.
Then $a_j(T_1) = a_j(T_2)$, again by Lemma~\ref{determined}.  
For ease of notation, write
$a_i$ instead of $a_i(T_1)$ (or $a_i(T_2)$) for $i=1,\ldots, j$, and write
$b_i$ instead of $b_i(T_1)$
(or $b_i(T_2)$) for $i=1,\ldots, j-1$. 

For $s=1,2$ let $T_s'$ 
be the tree (with $n-j$ vertices) obtained by deleting the vertices
$a_1,\ldots, a_j$ from $T_s$.    Both $T'_1$ and $T'_2$ have vertex set 
\[ V_j = \{ 1,2, \ldots, n\} \setminus \{ a_1,\ldots, a_{j}\}.\]
If
$b_j(T_1) = b_j(T_2)$ then $T'_1$ and $T'_2$ have the same degree sequence, 
since (in this case) precisely the same edges have been deleted from $T_1$ and~$T_2$.
In this case, $Y_j(T_1)= Y_j(T_2)$.

Otherwise, the degree sequences of $T'_1$, $T'_2$ differ only for the two
vertices $b_j(T_1)$ and $b_j(T_2)$.
Specifically, vertex $b_j(T_1)$ has degree in $T_1'$ which is equal to its
degree in $T_2'$ minus 1, while vertex $b_j(T_2)$ has degree in $T_1'$ which is equal
to its degree in $T_2'$ plus 1.  Hence $T'_1$ and $T'_2$ have the same
degree on all vertices in the set
\[ U_j(T_1,T_2) =  V_j \setminus \{ b_j(T_1),\, b_j(T_2)\}.\]
For $s=1,2$, let $\yvec_s$ be the degree sequence of $T_s'$ (on the vertex set
$V_j$) and let $\calT'_s$ denote the set of all trees on the vertex set $V_j$
with degree sequence $\yvec_s$.  
Observe that $\yvec_s$ and $\calT'_s$ depend only on
$(b_1,\ldots, b_{j-1}, b_j(T_s))$ and $\xvec$.
By relabelling the equivalence classes if necessary, we may assume that $T_s\in C_{j,s}$ for $s=1,2$.
The map $\varphi:C_{j,s} \rightarrow \calT'_s$
which sends a tree $T\in C_{j,s}$ to $T\setminus \{ a_1,\ldots, a_j\}$
is a bijection.  To see this, observe that the inverse map $\varphi^{-1}$
takes a tree in $\calT'_s$,
adds the vertices $a_1,\ldots, a_j$ and the edges 
\[ \{ \{ a_1,b_1\},\ldots, \{ a_{j-1}, b_{j-1}\},\,\, \{ a_j, b_j(T_s)\}\}
\] giving a tree in $C_{j,s}$.  Therefore, for $s=1,2$,
\[ \frac{1}{|C_{j,s}|}\, \sum_{T\in C_{j,s}} F(T\setminus \{ a_1,\ldots, a_j\})
 = \frac{1}{|\calT'_s|}\, \sum_{T'\in \calT'_s} F(T').
\]
Combining this with (\ref{Fdef}) and the definition of $Y_{j,s}$, 
we see that for $s=1,2$,
\begin{align*}
 Y_{j,s} &= \frac{1}{|C_{j,s}|}\,\sum_{T\in C_{j,s}} 
  \, \sum_{\{k,\ell\}\in E(T)} \phi(k)\phi(\ell)\\
  &= \left( \sum_{i=1}^{j-1} \phi(a_i)\phi(b_i)\right)
                + \phi(a_j)\phi(b_j(T_s)) + 
    \frac{1}{|C_{j,s}|}\, \sum_{T\in C_{j,s}} F(T\setminus \{ a_1,\ldots, a_j\})\\
  &= \left( \sum_{i=1}^{j-1} \phi(a_i)\phi(b_i)\right)
                + \phi(a_j)\phi(b_j(T_s)) + 
    \frac{1}{|\calT'_s|}\, \sum_{T'\in \calT'_s} F(T').
\end{align*}
Applying Lemma~\ref{tree-prob}(ii) gives
\begin{align*}
Y_{j,1} - Y_{j,2}
  &= \left(\phi(b_j(T_1)) - \phi(b_j(T_2))\right)\,
     \biggl( \phi(a_j) - \frac{1}{n-j-2}\sum_{\ell \in U_j(T_1,T_2)} \phi(\ell) \biggr)\\
      &= \frac{\phi(b_j(T_1)) - \phi(b_j(T_2))}{n-j-2}\, \sum_{\ell\in U_j(T_1,T_2)}
        \(\phi(a_j) - \phi(\ell)\).
        \end{align*}
(Note that if $T_1\simj{j} T_2$ then $b_j(T_1)=b_j(T_2)$ and the above
equality also holds.)

Recall the definition of $\|\phi\|_m$  from (\ref{phi-norm}), 
and let $c\in\mathbb{R}$ be the minimising value in this definition. 
By the triangle inequality,
\begin{align}
 \frac{1}{n-j-2}\, \biggl|\sum_{\ell\in U_j(T_1,T_2)} (\phi(a_j) - \phi(\ell)) \biggr| 
   &\leq |\phi(a_j) - c| + \frac{1}{n-j-2}\, \biggl|\sum_{\ell\in U_j(T_1,T_2)} (c - \phi(\ell))\biggr| \nonumber \\
   &\leq |\phi(a_j) - c| + \frac{\| \phi\|_m }{n-j-2}
\label{intermediate}
   \end{align}
since $U_j(T_1,T_2)$ has $n-j-2$ elements and $j\leq n-3$. 
Therefore, for any equivalence class $C_{j-1,\ell}$, we have
\begin{align}
& \biggl(\, \sup_{T'\in C_{j-1,\ell}}  Y_j(T') + \sup_{T'\in C_{j-1,\ell}} (-Y_j(T'))\biggr)^2 \nonumber\\
 &\hspace*{3cm} \leq \frac{(\b-\a)^2}{(n-j-2)^2}\,
    \sup_{T_1,T_2\in C_{j-1,\ell}} \biggl(\, \sum_{\ell\in U_j(T_1,T_2)} (\phi(a_j) - \phi(\ell))\biggr)^{\!2}\nonumber\\
    &\hspace*{3cm} \leq (\b-\a)^3\, \min\left\{ \b-\a,\, | \phi(a_j) -c| + \frac{\| \phi\|_m}{n-j-2}\right\}.\label{OK}
    \end{align}
(Here we take the minimum of two possible upper bounds: the first arises from taking the
worst case summand for both factors in the line above, while the second arises by 
applying (\ref{intermediate}) to one of the factors.)

Now let $C_{j-1}(\widehat{T})$ denote the random set which is the equivalence
class with respect to $\simj{j-1}$ which contains $\widehat{T}$.
It follows from (\ref{OK}) that   
\begin{align*}		
\operatorname{ran}(Y_j\mid \calF_{j-1})^2 &=
  \biggl(\,\sup_{T\in C_{j-1}(\widehat{T})}  Y_j(T) 
         + \sup_{T\in C_{j-1}(\widehat{T})} (-Y_j(T)) \biggr)^2 
	\\
	&\leq  (\b-\a)^3\, \min\left\{ \b-\a,\, | \phi(a_j(\widehat{T})) -c| + \frac{\| \phi\|_m}{n-j-2}\right\}.
\end{align*}
Using  the definition of $c$, the standard upper bound on the harmonic series
and the fact that each vertex is chosen as $a_j(\widehat{T})$ at most once during 
the Pr{\" u}fer process, we get that 
\[ \sum_{j=1}^{n-3} \operatorname{ran}(Y_j\mid \calF_{j-1})^2
    \leq (\b - \a)^3 \, \min\left\{ (\b - \a) n,\,\,  \| \phi\|_m\ (\ln n + 2)\right\}.
\]
Observe that the left hand side does not change if $F$ is replaced by $-F$
(and hence, the same bound is obtained whether $\xi = 1$ or $\xi = -1$).
Since $\E(e^{Y_{n-3}}) =
\E(e^{F(\widehat{T})})$ and $Y_0 = \E(F(\widehat{T}))$,  applying
Theorem~\ref{martingales} completes the proof.
\end{proof}

\section{Proof of Theorem~\ref{just-one-x}}\label{s:completing-calculations}

First we note the following corollary of Theorem~\ref{tree-general}.
Recall the definition of $\eg(\xvec)$ and $\barg(\xvec)$ from (\ref{beta-def}), (\ref{mubar-def}),
respectively.  

\begin{lemma} Under the conditions of Theorem~\ref{main-sparse},
\[
\eg(\xvec) =\exp\left(-\barg(\xvec)+O\!\left(\min\left\{\frac{\dmax^4}{(d-2)^2n},\,
  \frac{\dmax^3\, \ln n}{(d-2)n}\right\} \right)\right).
\]
\label{Eeg}
\end{lemma}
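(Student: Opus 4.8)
The plan is to apply Theorem~\ref{tree-general} to the function $\mu(T)$, after rewriting it in the form $F(T) = \sum_{\{j,k\}\in E(T)}\phi(j)\phi(k)$. Set $m = \nfrac12((d-2)n+2)$, the number of edges in a graph with degree sequence $\dvec-\xvec$, and take $\phi(j) = (d_j-x_j)/(2m)^{1/2}$, so that $\mu(T) = F(T)$ exactly. Since $0 \leq d_j - x_j \leq \dmax$ for all $j$, we have $\phi(j) \in [\a,\b]$ with $\a = 0$ and $\b = \dmax/(2m)^{1/2} = O(\dmax/((d-2)n)^{1/2})$, which also gives $\b - \a = O(\dmax/((d-2)n)^{1/2})$.

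Theorem~\ref{tree-general} with $\xi = -1$ then says that $\eg(\xvec) = \exp(-\barg(\xvec) + K)$ with $0 \leq K \leq \nfrac18 L_\phi$, where $L_\phi = (\b-\a)^3 \min\{(\b-\a)n,\, \|\phi\|_m(\ln n + 2)\}$. (Note $\barg(\xvec)$ is precisely the average of $\mu(T) = F(T)$ over $\calT_{\xvec}$, matching the definition in~\eqref{mubar-def}.) It remains to bound the two terms inside the minimum. For the first: $(\b-\a)^3 \cdot (\b-\a)n = (\b-\a)^4 n = O\bigl(\dmax^4 n / ((d-2)n)^2\bigr) = O\bigl(\dmax^4/((d-2)^2 n)\bigr)$. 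For the second I need a bound on the seminorm $\|\phi\|_m = \min_c \sum_j |\phi(j)-c|$; taking $c = 0$ gives $\|\phi\|_m \leq \sum_j \phi(j) = \sum_j (d_j-x_j)/(2m)^{1/2} = ((d-2)n+2)/(2m)^{1/2} = (2m)^{1/2} = O(((d-2)n)^{1/2})$, using $\sum_j(d_j - x_j) = 2m$. Hence $(\b-\a)^3 \|\phi\|_m (\ln n+2) = O\bigl(\dmax^3/((d-2)n)^{3/2}\bigr) \cdot O\bigl(((d-2)n)^{1/2}\bigr) \cdot O(\ln n) = O\bigl(\dmax^3 \ln n/((d-2)n)\bigr)$. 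Combining, $K = O(L_\phi) = O\bigl(\min\{\dmax^4/((d-2)^2 n),\, \dmax^3 \ln n/((d-2)n)\}\bigr)$, which is the claimed error term.

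The only mild subtlety is checking that the hypotheses of Theorem~\ref{tree-general} are genuinely met — but that theorem requires only that $\phi$ map into some bounded real interval $[\a,\b]$ and that $\xvec$ be a tree degree sequence, both of which hold here (a suitable degree sequence is by definition a tree degree sequence, and $\phi$ is bounded as above). The main "obstacle", such as it is, is simply the bookkeeping: making sure the substitution $(2m)^{1/2}$ is handled consistently so that $F = \mu$ on the nose rather than up to a constant factor, and that the estimate $\|\phi\|_m \le (2m)^{1/2}$ is used (rather than the cruder $\|\phi\|_m \le n\b$, which would only recover the first term of the minimum). No further input is needed — the result is an essentially immediate corollary of Theorem~\ref{tree-general} once $\phi$ is chosen correctly.
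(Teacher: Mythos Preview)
Your proof is correct and follows essentially the same approach as the paper: the same choice $\phi(j)=(d_j-x_j)/\sqrt{(d-2)n+2}$ (your $(2m)^{1/2}$ equals $\sqrt{(d-2)n+2}$), the same $\a=0$, $\b=\dmax/\sqrt{(d-2)n+2}$, the same bound $\|\phi\|_m\le\sqrt{(d-2)n+2}$ via $c=0$, and the same application of Theorem~\ref{tree-general} with $\xi=-1$.
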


\begin{proof}
Set
\[ \phi(j) = \frac{d_j-x_j}{\sqrt{(d-2)n+2}}\]
for $j\in \{ 1,2,\ldots, n\}$, and let $\xi=-1$.   
We can take $\a=0$ and $\b=\dmax/\sqrt{(d-2)n+2}$.  Next, we bound
\[ \| \phi\|_m \leq \sum_{j=1}^n \frac{d_j-x_j}{\sqrt{(d-2)n+2}} = \sqrt{(d-2)n + 2}.\]
Finally, observe that
\[ \frac{\dmax^3 (\| \phi\|_m + \b)(\ln n + 2)}{((d-2)n+2)^{3/2}}
  = O\left(\frac{\dmax^3\, \ln n}{(d-2)n} + \frac{\dmax^4\ln n}{((d-2)n)^{2}}\right)
    = O\left(\frac{\dmax^3\,\ln n}{(d-2)n}\right).
    \]
Now the result follows from Theorem~\ref{tree-general}.
\end{proof}

We can now prove Theorem~\ref{just-one-x}, giving an asymptotic
expression for the expected number $\E \tau_\dvec(\xvec)$ of spanning trees
in $\calG_{\dvec}$ with degree sequence $\xvec$.  

\begin{proof}[Proof of Theorem~\ref{just-one-x}]\
Firstly note that, by \eqref{fred},
\begin{align}
 \barg(\xvec) 
   &= \frac{1}{(n-2)((d-2)n+2)}\, 
    \sum_{j\ne k} \,(x_j-1)(d_j-x_j)(d_k-x_k)
  \label{g-useful}\\
  &=  \frac{1}{n}\, \sum_{j=1}^n (x_j-1)(d_j-x_j) 
               + O\left(\frac{\dmax^2}{(d-2)n}\right). \label{barg-expression}
\end{align}
We rewrite (\ref{one-x}) as
\begin{align}
&\E \tau_{\dvec}(\xvec) \nonumber\\*
&\quad = e^{O(\dmax^4/((d-2)n))}\,
  \frac{(dn/2)_{n-1}\, 2^{n-1}\, \hat{d}^n}{(dn)_{n}}\, 
         \binom{(d-1)n}{n-2}^{\!\!-1}\, \left(\prod_{j=1}^n \binom{d_j-1}{x_j-1} \right) \,
     e^{f(\xvec)}\, \beta(\xvec)
\label{new}
\end{align}
with $f(\xvec)$ as defined in (\ref{fg-def}).
Applying Stirling's approximation gives
\[ \frac{(dn/2)_{n-1}\, 2^{n-1}\, \hat{d}^n}{(dn)_{n}} = \Hd\,
  \left(1 + O\left(\frac{1}{(d-2)n}\right)\right)\]
where $\Hd$ is defined in the statement of Theorem~\ref{main-sparse}.
Combining Lemma~\ref{Eeg} and (\ref{barg-expression}) gives
\begin{align}
  \beta(\xvec) 
 &= 
   \exp\left(-\barg(\xvec) + O\left(\min\left\{\frac{\dmax^4}{(d-2)^2 n},\,
   \frac{\dmax^3\, \ln n}{(d-2)n}\right\} \right)\right)\nonumber \\[0.5ex]
  &= \exp\left(-
   \frac{1}{n}\, \sum_{j=1}^n (x_j-1)(d_j-x_j) \right. \nonumber \\[-0.6ex]
  & \left. \hspace*{3cm} {} 
    + O\left(\frac{\dmax^2}{(d-2)n} + \min\left\{\frac{\dmax^4}{(d-2)^2 n},\,
   \frac{\dmax^3\, \ln n}{(d-2)n}\right\} \right)\right). 
    \label{beta-approx}
\end{align}
In some cases, when $d-2$ is small,
we can obtain a smaller error bound by a different argument.
Observe that 
\begin{equation}
\label{beta-bounds}
  e^{- \barg(\xvec)} \leq \beta(\xvec)\leq 1,
\end{equation}
using Jensen's inequality for the lower bound.
It follows from (\ref{barg-expression}) that
\[ 
  \barg(\xvec) = O\left(\frac{\dmax^2}{(d-2)n} + (d-2)\dmax\right).
\]
Hence we can replace the upper bound on $\beta(\xvec)$ in (\ref{beta-bounds})  
by $e^{-\barg(\xvec)}$ if
we include an error term of this magnitude, leading to 
\begin{align} 
  \beta(\xvec) 
   &= \exp\left(-\barg(\xvec) 
    + O\left( \frac{\dmax^2}{(d-2)n}  + (d-2)\dmax\right)\right) \nonumber \\
   &= \exp\biggl(- \frac{1}{n}\, \sum_{j=1}^n (x_j-1)(d_j-x_j) 
           + O\biggl( \frac{\dmax^2}{(d-2)n} + (d-2)\dmax\biggr) \biggr)\label{beta-alternative}
\end{align}
using (\ref{barg-expression}).
We may choose to use either this expression or (\ref{beta-approx}), whichever
gives the smaller bound.  
Finally, observe that
\begin{equation}
\label{lambda0}
 \lambda_0 + \lambda_0^2 = \frac{(R+d^2)^2}{4d^2} - \dfrac{1}{4}.
\end{equation}
Combining this with (\ref{fg-def}), (\ref{new}), (\ref{beta-approx}) and 
(\ref{beta-alternative}) completes the proof.
\end{proof}

\section{Proof of Theorem~\ref{main-sparse}}\label{s:final-approx}

In this section we prove Theorem~\ref{main-sparse} by summing the expression from 
Theorem~\ref{just-one-x} over all suitable degree sequences $\xvec$.
Given a suitable degree sequence $\xvec$, define
\begin{equation} 
g(\xvec) = f(\xvec) - \barg(\xvec) = 
   \frac{(R+d^2)^2}{4d^2} - \dfrac{1}{4}  -\lambda(\xvec) - \lambda(\xvec)^2 - \barg(\xvec),
\label{g-def}
\end{equation}
using (\ref{lambda0}).
By (\ref{barg-expression}) and Theorem~\ref{just-one-x} we have
\begin{align}
\E \tau_\dvec &= 
 \Hd\, \sum_{\xvec} 
\binom{(d-1)n}{n-2}^{\!\!-1}\, \left(\,\prod_{j=1}^n \binom{d_j-1}{x_j-1}\right)
\nonumber \\ & \hspace*{4cm} {} \times
   \exp\left(  g(\xvec) 
    +  O\left( \frac{\dmax^4}{(d-2)n} + \eta \right)\right)\,
\label{sum-over-x}
\end{align}
where the sum is over all suitable degree sequences $\xvec$.
We now interpret this sum as an expected value of a function of a nonuniform 
distribution on suitable degree sequences. 

\begin{lemma}
\label{useful-later}
Fix a partition $A_1,\ldots, A_n$ of $\{ 1,2,\ldots, (d-1)n\}$
such that $\card{A_j} = d_j-1$ for $j=1,\ldots, n$, and let $B$ be
a uniformly random subset of $\{ 1,2,\ldots, (d-1)n\}$ of size $n-2$.  
Define the random
vector $\X = \X(B) = (X_1,\ldots, X_n)$ by $X_j = |A_j\cap B| + 1$.
Then
\[
     \E \tau_\dvec =  \Hd\, 
  \exp\left(  O\left( \frac{d_\mx^4}{(d-2)n} + \eta\right)\right)\,
  \E\left(e^{g(\X)}\right).
\]
\end{lemma}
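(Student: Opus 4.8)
The plan is to reinterpret the sum over suitable degree sequences $\xvec$ in \eqref{sum-over-x} as an expectation over the random vector $\X=\X(B)$ defined in the statement. First I would observe that the total number of subsets $B\subseteq\{1,\ldots,(d-1)n\}$ of size $n-2$ is $\binom{(d-1)n}{n-2}$, so $B$ being uniform means each such subset has probability $\binom{(d-1)n}{n-2}^{-1}$. For a fixed target vector $\xvec=(x_1,\ldots,x_n)$ with $1\le x_j\le d_j$ and $\sum_j(x_j-1)=n-2$ (i.e.\ $\xvec$ a suitable degree sequence), the number of subsets $B$ with $|A_j\cap B|=x_j-1$ for every $j$ is exactly $\prod_{j=1}^n\binom{|A_j|}{x_j-1}=\prod_{j=1}^n\binom{d_j-1}{x_j-1}$, since we choose $x_j-1$ elements from each block $A_j$ independently and the block sizes are $d_j-1$. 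Hence
\[
  \Pr(\X(B)=\xvec)=\binom{(d-1)n}{n-2}^{\!-1}\prod_{j=1}^n\binom{d_j-1}{x_j-1}.
\]
Note also that $\X(B)$ is always a suitable degree sequence: each $X_j=|A_j\cap B|+1$ satisfies $1\le X_j\le |A_j|+1=d_j$, and $\sum_j(X_j-1)=|B|=n-2$, so $\X$ is automatically a tree degree sequence bounded above by $\dvec$.

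With this probabilistic interpretation in hand, the expectation $\E(e^{g(\X)})$ expands as
\[
  \E\bigl(e^{g(\X)}\bigr)=\sum_{\xvec}\Pr(\X=\xvec)\,e^{g(\xvec)}
  =\sum_{\xvec}\binom{(d-1)n}{n-2}^{\!-1}\Bigl(\prod_{j=1}^n\binom{d_j-1}{x_j-1}\Bigr)e^{g(\xvec)},
\]
where the sum runs over all suitable degree sequences $\xvec$ (terms for non-suitable $\xvec$ simply do not arise, consistent with $\X$ never taking such values). Comparing this with \eqref{sum-over-x}, the only discrepancy is that in \eqref{sum-over-x} each summand carries an extra factor $\exp(O(\dmax^4/((d-2)n)+\eta))$. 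Since this error term is uniform over $\xvec$ — it does not depend on $\xvec$ at all, by the statement of Theorem~\ref{just-one-x} — it can be pulled out of the sum. Therefore
\[
  \E\tau_\dvec=\Hd\,\exp\Bigl(O\bigl(\tfrac{d_\mx^4}{(d-2)n}+\eta\bigr)\Bigr)\sum_{\xvec}\binom{(d-1)n}{n-2}^{\!-1}\Bigl(\prod_{j=1}^n\binom{d_j-1}{x_j-1}\Bigr)e^{g(\xvec)}
  =\Hd\,\exp\Bigl(O\bigl(\tfrac{d_\mx^4}{(d-2)n}+\eta\bigr)\Bigr)\,\E\bigl(e^{g(\X)}\bigr),
\]
which is the claimed identity.

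There is no real obstacle here; the lemma is essentially a change of variables packaging the earlier per-$\xvec$ estimate into a form amenable to the concentration machinery of Section~\ref{s:concentration}. The one point requiring a line of care is verifying that $\X(B)$ ranges over exactly the suitable degree sequences and that the pushforward measure is the one appearing in \eqref{sum-over-x} — in particular checking the block-counting identity $\#\{B:|A_j\cap B|=x_j-1\ \forall j\}=\prod_j\binom{d_j-1}{x_j-1}$ and the constraint $\sum_j(x_j-1)=n-2$. The purpose of this reformulation, to be exploited in the final calculation, is that $\X(B)$ depends on the random set $B$ in a way that Corollary~\ref{subsets} controls: swapping one element of $B$ changes at most two coordinates of $\X$ by one each, so $g(\X)$ has bounded differences of the type needed there.
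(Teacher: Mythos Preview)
Your proposal is correct and follows essentially the same approach as the paper: compute $\Pr(\X=\xvec)=\binom{(d-1)n}{n-2}^{-1}\prod_j\binom{d_j-1}{x_j-1}$ by counting subsets $B$ compatible with~$\xvec$, then substitute into~\eqref{sum-over-x}. Your version is slightly more explicit in verifying that $\X(B)$ always lands in the set of suitable degree sequences and that the error term is uniform in~$\xvec$, but the argument is identical in substance.
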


\begin{proof}
Let $\xvec$ be a suitable degree sequence.
Since the sets $A_j$ are disjoint, there are $\prod_{j=1}^n \binom{d_j-1}{x_j-1}$
ways to choose a subset of $\{ 1,\ldots, (d-1)n\}$ with precisely
$x_j-1$ elements in $A_j$, for $j=1,\ldots, n$.  It follows that
%
\[ \Pr(\X = \xvec) 
   = \binom{(d-1)n}{n-2}^{\!\!-1}\, \prod_{j=1}^n \binom{d_j-1}{x_j-1}.\]
Substituting this into (\ref{sum-over-x}) completes the proof.
\end{proof}

Next, we prove that $\E\(e^{g(\X)}\)$ can be approximated by 
$e^{\E g(\X)}$ by applying Corollary~\ref{subsets}. 
We say that two suitable degree sequences $\xvec$ and $\xvec'$ are \emph{adjacent} if
$\xvec$ and $\xvec'$ differ in precisely two entries, say in entries $j$ and $k$,
such that $x'_j = x_j + 1$ and $x_k' = x_k-1$.
Adjacent degree sequences correspond to subsets $A, A'$ of $\{ 1,2,\ldots, (d-1)n\}$ of size
$n-2$ which have $n-3$ elements in common.
In order to apply Corollary~\ref{subsets} to $g$ we must bound the
amount by which $g(\xvec)$ can differ from $g(\xvec')$ when $\xvec$ and $\xvec'$
are adjacent.

\begin{lemma}
\label{f+g}
Suppose that $\xvec$, $\xvec'$ are two suitable degree
sequences which are adjacent.   Then
\[ \left| g(\xvec) - g(\xvec')\right| = 
    O\left(\frac{\dmax^2}{(d-2) n}\right).
\]
\end{lemma}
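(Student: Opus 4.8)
The plan is to expand $g(\xvec) - g(\xvec')$ term by term using the definition \eqref{g-def}, namely $g(\xvec) = \frac{(R+d^2)^2}{4d^2} - \frac14 - \lambda(\xvec) - \lambda(\xvec)^2 - \barg(\xvec)$. The first two terms do not depend on $\xvec$ and so cancel. What remains is to bound $|\lambda(\xvec) - \lambda(\xvec')|$, $|\lambda(\xvec)^2 - \lambda(\xvec')^2|$, and $|\barg(\xvec) - \barg(\xvec')|$, each by $O(\dmax^2/((d-2)n))$.

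First I would handle $\lambda(\xvec)$. Recall $\lambda(\xvec) = \frac{1}{2(d-2)n+4}\sum_{j}(d_j-x_j)_2$. If $\xvec$ and $\xvec'$ differ only in coordinates $p$ and $q$ with $x'_p = x_p+1$, $x'_q = x_q-1$, then $\lambda(\xvec) - \lambda(\xvec')$ is $\frac{1}{2(d-2)n+4}$ times $[(d_p-x_p)_2 - (d_p-x_p-1)_2] + [(d_q-x_q)_2 - (d_q-x_q+1)_2]$. Since $(m)_2 - (m-1)_2 = 2(m-1)$ and each $d_j - x_j$ lies in $[0,\dmax]$, each bracket is $O(\dmax)$ in absolute value, so $|\lambda(\xvec) - \lambda(\xvec')| = O(\dmax/((d-2)n))$. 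For the squared term, $|\lambda(\xvec)^2 - \lambda(\xvec')^2| = |\lambda(\xvec) - \lambda(\xvec')|\cdot|\lambda(\xvec)+\lambda(\xvec')|$; since $\lambda(\xvec) = O(\dmax^2)$ (crudely, each $(d_j-x_j)_2 \le \dmax^2$ and there are $n$ terms over a denominator of order $(d-2)n$), this gives $O(\dmax/((d-2)n)) \cdot O(\dmax) = O(\dmax^2/((d-2)n))$, which is within the claimed bound.

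The main work is $\barg(\xvec) - \barg(\xvec')$. I would use the convenient closed form \eqref{g-useful}, $\barg(\xvec) = \frac{1}{(n-2)((d-2)n+2)}\sum_{j\ne k}(x_j-1)(d_j-x_j)(d_k-x_k)$, and note that for fixed $j$, $\sum_{k}(d_k-x_k) = (d-2)n+2$, so $\sum_{k: k\ne j}(d_k-x_k) = (d-2)n+2 - (d_j-x_j)$. Hence $\barg(\xvec) = \frac{1}{n-2}\sum_j (x_j-1)(d_j-x_j) - \frac{1}{(n-2)((d-2)n+2)}\sum_j (x_j-1)(d_j-x_j)^2$, matching \eqref{barg-expression}. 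Changing $\xvec$ to $\xvec'$ only affects coordinates $p$ and $q$. In the first sum $\frac{1}{n-2}\sum_j (x_j-1)(d_j-x_j)$, the $p$-term changes by $(x_p)(d_p-x_p-1) - (x_p-1)(d_p-x_p) = d_p - 2x_p - 1 + x_p\cdot 0$, wait — more carefully, it changes by $O(\dmax)$ since each factor is $O(\dmax)$, similarly for $q$, so this contributes $O(\dmax/(n-2)) = O(\dmax/n)$, well within the bound. The second sum has each term $(x_j-1)(d_j-x_j)^2 = O(\dmax^3)$; only two terms change, so that sum changes by $O(\dmax^3)$, and divided by $(n-2)((d-2)n+2) = \Theta((d-2)n^2)$ this gives $O(\dmax^3/((d-2)n^2))$, which is $O(\dmax^2/((d-2)n))$ provided $\dmax \le n$ — which holds since $\dmax^4 \le (d-2)n \le n^2$ forces $\dmax \le n^{1/2}$. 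Summing the three contributions yields $|g(\xvec) - g(\xvec')| = O(\dmax^2/((d-2)n))$.

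The only mild subtlety is keeping the bookkeeping clean: since $x_p, x_q$ satisfy $1 \le x_j \le d_j \le \dmax$ and $0 \le d_j - x_j \le \dmax$, every single-coordinate factor appearing is bounded by $\dmax$, and the only place one must be slightly careful is ensuring the crude bound $\lambda(\xvec) = O(\dmax^2)$ really is uniform (it is, since $\sum_j (d_j-x_j)_2 \le \dmax \sum_j(d_j-x_j) = \dmax((d-2)n+2)$, giving $\lambda(\xvec) \le \dmax/2$, even better than needed). I expect no genuine obstacle here; the lemma is a routine Lipschitz-type estimate, and the main point is simply to verify that the largest error contribution, coming from the quadratic correction term in $\barg$, is dominated by $\dmax^2/((d-2)n)$ under the hypothesis $\dmax^4 \le (d-2)n$.
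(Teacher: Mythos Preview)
Your proposal is correct and follows essentially the same strategy as the paper: cancel the $\xvec$-independent terms, bound $|\lambda(\xvec)-\lambda(\xvec')|$ by $O(\dmax/((d-2)n))$, use $\lambda(\xvec)=O(\dmax)$ to handle the squared term, and then bound the change in $\barg$. The only minor variation is in the treatment of $\barg$: the paper changes one coordinate at a time and works directly with the symmetric form \eqref{fred0}, obtaining $|\barg(\yvec)-\barg(\xvec)|=O(\dmax/n)=O(\dmax^2/((d-2)n))$ and applying this twice, whereas you first rewrite $\barg(\xvec)$ exactly as a main term $\frac{1}{n-2}\sum_j(x_j-1)(d_j-x_j)$ minus a correction $\frac{1}{(n-2)((d-2)n+2)}\sum_j(x_j-1)(d_j-x_j)^2$ and bound the change in each piece separately. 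Both routes are routine and yield the same bound; your decomposition makes the dependence on the two pieces of \eqref{barg-expression} more explicit, while the paper's approach avoids the side calculation showing $\dmax\le n$. One small point worth stating explicitly in your write-up: the step ``$O(\dmax/n)$ is well within the bound'' uses $d-2\le\dmax$, which is immediate since the average degree is at most the maximum degree.
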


\begin{proof}
Recall the definition of $g$ in (\ref{g-def}).
Firstly, observe that
\[ \lambda(\xvec')^2 - \lambda(\xvec)^2 = 
  \left( \lambda(\xvec') - \lambda(\xvec)\right)\left(\lambda(\xvec') + \lambda(\xvec)\right)
    = O(\dmax)\, \left(\lambda(\xvec') - \lambda(\xvec)\right)
    \]
since for any suitable $\xvec$ we have
\[ \lambda(\xvec) = O\left(\frac{\dmax}{(d-2)n}\right)\, \sum_{j=1}^n (d_j-x_j) = O(\dmax).\]
Next we calculate that
\begin{align*}
|\lambda(\xvec') - \lambda(\xvec)| = \frac{| (d_k-x_k) -(d_j-x_j-1)|}{(d-2)n+2}
  = O\left(\frac{\dmax}{(d-2)n}\right).
  \end{align*}
Therefore
\[ \left|\lambda(\xvec) + \lambda(\xvec)^2 - \( \lambda(\xvec') - \lambda(\xvec')^2\)\right|
   = O\left(\frac{\dmax^2}{(d-2)n}\right).
\]
Now we consider $\barg$. 
Suppose that $\yvec$ is a vector which disagrees with $\xvec$ in precisely one
position, say $y_i=x_i+ \zeta$ where $\zeta\in\{-1,1\}$.  Then
using (\ref{g-useful}) (most conveniently in the form in \eqref{fred0}),
\begin{align*}
| \barg(\yvec) -\barg(\xvec)| &\leq \frac{1}{(n-2)\, ((d-2)n+2)}\, \sum_{j:j\ne i}
  (d_j-x_j)  \, \left| (d_i - x_i) - (x_i + x_j - 2) + \zeta\right| \\
&= O\left(\frac{\dmax}{n}\right) = O\left(\frac{\dmax^2}{(d-2)n}\right).  
\end{align*}
Applying this twice gives a bound of the same magnitude on $|\barg(\xvec') - \barg(\xvec)|$,
completing the proof.
\end{proof}

Now we apply Corollary~\ref{subsets} to prove the following. 

\begin{lemma}
Under the conditions of Theorem~\ref{main-sparse},
\label{eEfg}
\[
\E\left(e^{g(\X)}\right)=\exp\left(\E g(\X)
   +O\!\left(\frac{\dmax^4}{(d-2) n}\right)\right).
\]
\end{lemma}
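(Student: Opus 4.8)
The plan is to apply Corollary~\ref{subsets} with $N = (d-1)n$, $r = n-2$, and $h = g$ composed with the natural correspondence between $(n-2)$-subsets of $\{1,\ldots,(d-1)n\}$ and suitable degree sequences $\xvec$ (via $X_j = |A_j \cap B| + 1$ as in Lemma~\ref{useful-later}). Two $(n-2)$-subsets $B, B'$ with $|B \cap B'| = n-3$ correspond precisely to adjacent suitable degree sequences $\xvec, \xvec'$ (possibly equal, if the swapped elements lie in the same block $A_j$, in which case the difference is trivially zero). Hence Lemma~\ref{f+g} supplies the Lipschitz-type bound required by Corollary~\ref{subsets} with $\alpha = O\!\left(\dmax^2/((d-2)n)\right)$.

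With that in hand, Corollary~\ref{subsets} gives
\[
\E\left(e^{g(\X)}\right) = \exp\left(\E g(\X) + K\right), \qquad
0 \leq K \leq \tfrac{1}{8}\,\min\{n-2,\, (d-2)n+2\}\,\alpha^2.
\]
Since $\min\{n-2, (d-2)n+2\} \leq n$ and $\alpha^2 = O\!\left(\dmax^4/((d-2)n)^2\right)$, we get
\[
K = O\!\left(n \cdot \frac{\dmax^4}{(d-2)^2 n^2}\right)
  = O\!\left(\frac{\dmax^4}{(d-2)^2 n}\right)
  = O\!\left(\frac{\dmax^4}{(d-2)n}\right),
\]
which is exactly the claimed error term. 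The conditions of Theorem~\ref{main-sparse} (in particular $\dmax^4 \leq (d-2)n$, so the distribution is genuinely supported on suitable degree sequences and the various quantities in $g$ are well-defined) are inherited directly.

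The only mild subtlety — and the main point requiring care rather than any real obstacle — is checking that the subset-swap relation on $\binom{[(d-1)n]}{n-2}$ matches up correctly with adjacency of suitable degree sequences, so that Lemma~\ref{f+g} is being applied to the right pairs. When $B' = (B \setminus \{b\}) \cup \{b'\}$ with $b \in A_j$ and $b' \in A_k$: if $j = k$ then $\X(B) = \X(B')$ and $|g(\X(B)) - g(\X(B'))| = 0$; if $j \neq k$ then $X_j$ decreases by $1$ and $X_k$ increases by $1$, so $\X(B)$ and $\X(B')$ are adjacent suitable degree sequences and Lemma~\ref{f+g} applies. Either way the bound $\alpha = O(\dmax^2/((d-2)n))$ holds, and we note that both $\X(B)$ and $\X(B')$ are automatically suitable (each entry stays in $[1, d_j]$ since $|A_j| = d_j - 1$), so $g$ is defined on the whole sample space. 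This completes the proof modulo invoking Corollary~\ref{subsets}.
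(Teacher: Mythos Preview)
Your overall strategy is exactly the paper's: apply Corollary~\ref{subsets} with $N=(d-1)n$, $r=n-2$, $h(B)=g(\X(B))$, and take $\alpha=O\bigl(\dmax^2/((d-2)n)\bigr)$ from Lemma~\ref{f+g}. The verification that swapping one element of $B$ corresponds to adjacent (or equal) suitable degree sequences is fine.

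However, your final estimate has a genuine gap. You bound $\min\{n-2,(d-2)n+2\}\le n$ and then write
\[
K=O\!\left(\frac{\dmax^4}{(d-2)^2 n}\right)=O\!\left(\frac{\dmax^4}{(d-2) n}\right).
\]
The last equality requires $1/(d-2)=O(1)$, i.e.\ $d-2$ bounded away from~0. That is \emph{not} implied by the hypotheses of Theorem~\ref{main-sparse}: the condition $\dmax^4\le (d-2)n$ permits, for instance, $\dmax=3$ and $d-2=81/n$, in which case $\dmax^4/((d-2)^2 n)=n/81\to\infty$ while $\dmax^4/((d-2)n)=1$.

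The fix is to keep the minimum rather than discard it. If $(d-2)n+2<n-2$ then $\min\{r,N-r\}=(d-2)n+2=O((d-2)n)$, giving directly
\[
K=O\!\left((d-2)n\cdot\frac{\dmax^4}{(d-2)^2 n^2}\right)=O\!\left(\frac{\dmax^4}{(d-2)n}\right).
\]
If instead $n-2\le (d-2)n+2$, then $d-2\ge 1-4/n\ge\tfrac12$ for large $n$, and in this regime your computation $K=O\bigl(\dmax^4/((d-2)^2 n)\bigr)=O\bigl(\dmax^4/((d-2)n)\bigr)$ is legitimate. This two-case split is exactly what the paper does.
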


\begin{proof}
We will apply Corollary~\ref{subsets} to $h(B)=g(\X(B))$, where the
random set $B$ is defined in Lemma~\ref{useful-later}.
We set $N=(d-1)n$ and $r=n-2$. Lemma~\ref{f+g} says that $h$ changes by at 
most $\alpha=O(\dmax^2/((d-2)n))$ if two entries of
the vector change by 1 (one increasing and one decreasing). 
The value of the error term given by Corollary~\ref{subsets} also depends on
$\min\{r,N-r\} = \min\{n-2,(d-2)n+2\}$.  We consider two cases.

If $(n-2) \leq (d-2)n+2$ then Corollary~\ref{subsets} gives
\begin{align*}
\E(e^{g(\X)}) &=\exp\left(\E g(\X) +O\!\left(\frac{\dmax^4 (n-2)}{(d-2)^2 n^2}\right)\right)
 \\*
 &= \exp\left(\E g(\X) +O\!\left(\frac{\dmax^4}{(d-2) n}\right)\right).
\end{align*}
(The second equality follows since in this case $d-2 \geq 1 - \dfrac{4}{n}\geq \dfrac{1}{2}$.)

Otherwise it holds that $(d-2)n+2 < n-2$, and here Corollary~\ref{subsets} says that
\begin{align*}
\E(e^{g(\X)}) &=\exp\left(\E g(\X) +O\!\left(\frac{\dmax^4((d-2)n+2)}{(d-2)^2 n^2}\right)\right)
  \\ 
 &= \exp\left(\E g(\X) +O\!\left(\frac{\dmax^4}{(d-2) n}\right)\right),
\end{align*}
as required.
\end{proof}

To approximate $\E g(\X)$, we need to be able to compute joint moments of the form 
$\E\big((X_j-1)_s\, (X_k-1)_t \big) $, 
where $\X=\X(B) = (X_1,\ldots, X_n)$. 
The random vector 
\[ \X - (1,1,\ldots, 1) = (X_1-1,\ldots, X_n-1)
\]
 has a multivariate hypergeometric distribution,
and from this it follows that the entries $X_1,\ldots, X_n$ of $\X=\X(B)$ satisfy
\begin{equation}
\label{joint}
 \E((X_i-1)_s(X_j-1)_t)=(d_i-1)_s (d_j-1)_t \, \frac{(n-2)_{s+t} }{ ((d-1)n)_{s+t}}
\end{equation}
for $i\neq j$. See for example~\cite[Equation (39.6)]{JKB}.

We now find an asymptotic expression for $\E(g(\X))$.

\begin{lemma}
\label{g-expression}
Under the conditions of Theorem~\ref{main-sparse},
\[ \E(g(\X)) = 
   \frac{6d^2-14d+7}{4(d-1)^2}
       + \frac{R}{2(d-1)^3}
       + \frac{(2d^2-4d+1)R^2}{4(d-1)^4\, d^2} 
         + O\biggl(\frac{d_\mx^3}{dn}\biggr).
\]
\end{lemma}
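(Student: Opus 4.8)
The plan is to compute $\E(g(\X))$ termwise, using the definition
\[
g(\xvec) = \frac{(R+d^2)^2}{4d^2} - \frac14 - \lambda(\xvec) - \lambda(\xvec)^2 - \barg(\xvec).
\]
The first two terms are deterministic, so we only need $\E\lambda(\X)$, $\E(\lambda(\X)^2)$ and $\E\barg(\X)$. The key computational input is the joint moment formula (\ref{joint}), together with (\ref{barg-expression}), which expresses $\barg(\xvec)$ (up to a negligible error) as $\frac1n\sum_j (x_j-1)(d_j-x_j)$. Since $\lambda(\X)$ is a linear combination of the quantities $(d_j - X_j)_2 = (d_j-1-(X_j-1))_2$, expanding the falling factorial reduces $\E\lambda(\X)$ to a combination of $\E(X_j-1)$ and $\E(X_j-1)_2$, each of which is given by the $i=j$, single-index specialisation of (\ref{joint}). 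Similarly $\E\barg(\X)$ reduces to $\E(X_j-1)$ and $\E(X_j-1)_2$, and $\E(\lambda(\X)^2)$ expands into a double sum of terms $\E\big((d_i-X_i)_2 (d_j-X_j)_2\big)$, handled by the $s=t=2$ case of (\ref{joint}) for $i \ne j$ and by a fourth-moment computation for $i=j$ (the diagonal contributes a lower-order term that will land in the error).

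The second step is to substitute the exact ratios of falling factorials from (\ref{joint}), e.g. $(n-2)_{s+t}/((d-1)n)_{s+t}$, and to Taylor-expand these in $1/n$. Here I would keep enough terms to recover the leading constant plus the $R/(d-1)^3$ correction plus the $R^2/(d^2(d-1)^4)$ correction, and verify that the truncation error is $O(d_\mx^3/(dn))$ (using $R \le d_\mx^2$, $\sum_j (d_j-d)^2 = Rn$, and $\sum_j d_j = dn$ to bound the various moment sums). The sums that appear will naturally be expressible in terms of $n$, $d$, $R$ and $\sum_j d_j^3 = O(d_\mx^2 dn)$; the last of these is what produces the stated $O(d_\mx^3/(dn))$ error once divided by the relevant powers of $n$.

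The final step is bookkeeping: collect the constant-order contributions from $\lambda_0 + \lambda_0^2 = (R+d^2)^2/(4d^2) - 1/4$ (via (\ref{lambda0})) against $-\E\lambda(\X) - \E(\lambda(\X)^2) - \E\barg(\X)$, and check that after cancellation the surviving terms assemble into exactly $\frac{6d^2-14d+7}{4(d-1)^2} + \frac{R}{2(d-1)^3} + \frac{(2d^2-4d+1)R^2}{4(d-1)^4 d^2}$. I expect the main obstacle to be the $\E(\lambda(\X)^2)$ term: it is the only genuinely quadratic quantity, so it requires the full two-index moment formula and careful separation of the $i=j$ diagonal, and it is the term responsible for the $R^2$ correction, so an error in its expansion would not be caught by the regular-graph check ($R=0$). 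Everything else is a routine, if lengthy, series expansion in $1/n$.
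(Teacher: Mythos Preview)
Your overall plan matches the paper's proof almost exactly: compute $\E\lambda(\X)$, $\E(\lambda(\X)^2)$ and $\E\barg(\X)$ via (\ref{joint}), then combine with (\ref{lambda0}). The treatment of $\lambda$ and $\lambda^2$ is the same as the paper's, including isolating the diagonal of the double sum and showing it is $O(\dmax^3/(dn))$.

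There is, however, one real gap in the $\barg$ step. You propose to use the approximation (\ref{barg-expression}),
\[
  \barg(\xvec)=\frac{1}{n}\sum_{j=1}^n (x_j-1)(d_j-x_j)+O\!\left(\frac{\dmax^2}{(d-2)n}\right),
\]
and call that error ``negligible''. It is not negligible against the target $O(\dmax^3/(dn))$ of the lemma: the ratio is $d/(\dmax(d-2))$, which blows up when $d-2$ is small. Under the hypotheses of Theorem~\ref{main-sparse} one may have $\dmax=O(1)$ and $d-2\asymp 1/n$ (cf.\ Corollary~\ref{close-to-2}), in which case $\dmax^2/((d-2)n)=\Theta(1)$ while $\dmax^3/(dn)=O(1/n)$. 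So the shortcut through (\ref{barg-expression}) cannot deliver the stated error.

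The fix is exactly what the paper does: compute $\E\barg(\X)$ from the exact double-sum expression (\ref{g-useful}),
\[
  \barg(\xvec)=\frac{1}{(n-2)((d-2)n+2)}\sum_{j\ne k}(x_j-1)(d_j-x_j)(d_k-x_k),
\]
and apply (\ref{joint}) to $\E\bigl((X_j-1)(d_j-X_j)(d_k-X_k)\bigr)$ for $j\ne k$. This is a two-index moment computation (not the single-index one you describe), and the $(d-2)n+2$ in the denominator cancels cleanly against a factor $((d-2)n+1)$ that emerges from the falling-factorial algebra, leaving an error of $O(\dmax^2/(dn))$ rather than $O(\dmax^2/((d-2)n))$. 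With that change your outline goes through.
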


\begin{proof}
First we estimate $\E\barg(\X)$ using (\ref{g-useful}). By (\ref{joint}) we have
\begin{align*}
&\frac{\E( (X_j-1)(d_j-X_j)(d_k-X_k) )}{(n-2)((d-2)n+2)} \\[-1ex]
 &\qquad = \frac{1}{(n-2)((d-2)n+2)}\, \Big( (d_j-2)(d_k-1)\E(X_j-1)
   \\ & \hspace*{12em} {} - (d_j-2)\E((X_j-1)(X_k-1))  \\ 
   & \hspace*{12em} {}  - (d_k-1)\E((X_j-1)_2)
      + \E( (X_j-1)_2 (X_k-1))\Big) \\
       &\qquad  = \frac{(d_j-1)_2 (d_k-1)}{(n-2)((d-2)n+2)}\, \bigg( \frac{n-2}{(d-1)n}
   - \frac{2(n-2)_2}{((d-1)n)_2}  
      + \frac{(n-2)_3}{((d-1)n)_3} \bigg) \\
  &\qquad  = \frac{(d_j-1)_2\, (d_k-1)\, ((d-2)n + 1)}{((d-1)n)_3}\\
  &\qquad  = (d_j-1)_2\, (d_k-1)\, \left(\frac{d-2}{(d-1)^3 n^2} + O\left(\frac{1}{d^3n^3}\right)\right).
\end{align*}
Now
\begin{align*} \sum_{j\neq k} (d_j-1)_2 (d_k-1)
  &= \sum_{j=1}^n (d_j-1)_2 \bigl( (d-1)n - (d_j-1)\bigr)\\*
   &= (d-1) (R + (d-1)_2) n^2 + O(\dmax^2 dn).
   \end{align*}
Hence the expected value of $\barg(\X)$ is given by
\begin{align}
  \E \barg(\X) &= \left(\frac{d-2}{(d-1)^3 n^2} + O\left(\frac{1}{d^3n^3}\right)\right)\,
      \sum_{j\neq k} (d_j-1)_2 \, (d_k-1) \nonumber \\
    &= \left(\frac{d-2}{(d-1)^3 n^2} + O\left(\frac{1}{d^3n^3}\right)\right)\,
        \left( (d-1)(R + (d-1)_2)\, n^2 + O(\dmax^2 dn)\right) \nonumber \\
  &=  \frac{(d-2)(R + (d-1)_2)}{(d-1)^2} + O\left(\frac{\dmax^2}{dn}\right).\label{emu}
\end{align}

Next, recall that
\[ 
  \lambda(\X ) =\frac1{2((d-2)n+2)}\, \sum_{j=1}^n (d_j-X_j)_2.
\]
Applying (\ref{joint}) shows that
\begin{align*}
&  \frac{\E( (d_j - X_j)_2 )}{2((d-2)n+2)}\\
 &\qquad = \frac{1}{2((d-2)n+2)}\,\left( (d_j-1)_2 - 2(d_j-2)\E(X_j-1 ) + \E((X_j-1)_2 )\right)\\
   &\qquad = \frac{(d_j-1)_2}{2((d-2)n+2)}\,\left( 1 - \frac{2(n-2)}{(d-1)n} + \frac{(n-2)_2}{((d-1)n)_2}\right)\\
     &\qquad = \frac{(d_j-1)_2\, ((d-2)n+1)}{2 ((d-1)n)_2}   \\
   & \qquad = (d_j-1)_2\, 
  \left(\frac{(d-2)}{2 (d-1)^2 n} + O\left(\frac{1}{d^2n^2}\right)\right). 
\end{align*}
Therefore
\begin{align}
\E( \lambda(\X) ) &= 
 \sum_{j=1}^n (d_j-1)_2\,
  \left(\frac{(d-2)}{2 (d-1)^2 n} + O\left(\frac{1}{d^2n^2}\right)\right) \nonumber\\
  &= \frac{(d-2)(R + (d-1)_2)}{2 (d-1)^2 } + O\left(\frac{\dmax}{dn}\right).
\label{f1}
\end{align}
The same approach works for $\E(\lambda(\X)^2)$ but the details
are a little messier.  Observe that
\begin{align}
&\lambda(\X)^2 \nonumber \\*[-1ex]
 &= \frac{1}{4((d-2)n + 2)^2}\,
  \left(\left(\sum_{j\neq k} (d_j-X_j)_2 (d_k-X_k)_2 \right)
    + \sum_{j=1}^n (d_j-X_j)^2 (d_j-X_j-1)^2\right).
\label{expand}
\end{align}
Applying (\ref{joint}) to the off-diagonal summands gives
\begin{align*}
&\frac{\E( (d_j-X_j)_2 (d_k-X_k)_2 )}{4((d-2)n+2)^2}\,  \\
 &\quad= \frac{1}{4((d-2)n+2)^2}\, \Big(
      (d_j-1)_2(d_k-1)_2 - 2(d_j-1)_2 (d_k-2) \E(X_k-1)\\ & \qquad \quad {} - 2(d_j-2)(d_k-1)_2 \E(X_j-1)   + (d_j-1)_2 \E((X_k-1)_2 ) \\ & \qquad \quad {}+ 4(d_j-2)(d_k-2) \E((X_j-1)(X_k-1)) 
   + (d_k-1)_2 \E((X_j-1)_2 ) \\ & \qquad \quad {} 
  - 2(d_j-2)\, \E( (X_j-1)(X_k-1)_2 )  - 2(d_k-2)\, \E((X_j-1)_2(X_k-1))  \\ 
  & \hspace*{95mm} {} + \E((X_j-1)_2(X_k-1)_2) \Big)\\
  &\quad = \frac{(d_j-1)_2 (d_k-1)_2}{4((d-2)n+2)^2}\,
    \left( 1 - \frac{4(n-2)}{(d-1)n} + \frac{6(n-2)_2}{((d-1)n)_2} - \frac{4(n-2)_3}{((d-1)n)_3}
       + \frac{(n-2)_4}{((d-1)n)_4}\right)\\
   &\quad = 
   \frac{(d_j-1)_2 (d_k-1)_2 \,((d-2)n+1)_3}{4((d-1)n)_4 ((d-2)n+2)}\\
   &\quad = (d_j-1)_2 (d_k-1)_2 \, \left( \frac{(d-2)^2\, }{4(d-1)^4 n^2} + O\left(\frac{1}{d^3n^3}\right)\right).
   \end{align*}
   Next, calculate
\[ \sum_{j\neq k} (d_j-1)_2 (d_k-1)_2 = (R + (d-1)_2)^2  n^2 + 
    O(\dmax^3 dn).\]
Therefore the contribution to $\lambda(\X )^2$ from the off-diagonal summands is
\begin{align*}
& \left(\frac{(d-2)^2}{4(d-1)^4 n^2} + O\left(\frac{1}{d^3n^3}\right)\right)\, 
\sum_{j \neq k} (d_j-1)_2 (d_k-1)_2\\
   &=  \left(\frac{(d-2)^2}{4(d-1)^4 n^2 } + O\left(\frac{1}{d^3 n^3}\right)\right)\, 
      \Bigl( (R + (d-1)_2)^2 n^2 + O\left(\dmax^3 d n\right)\Bigr)\\
    &= \frac{(d-2)^2 (R+(d-1)_2)^2}{4(d-1)^4} + O\left(\frac{\dmax^3}{dn}\right).
\end{align*}
The contribution to $\E(\lambda(\X)^2 )$ from the diagonal
terms of (\ref{expand}) (that is, the second summation in (\ref{expand})) is
\begin{align*}
    \frac{1}{4((d-2)n + 2)^2} \, \sum_{j=1}^n \E( (d_j-X_j)^2 (d_j-X_j-1)^2 )
  &= O\left(\frac{\dmax^2}{(d-2) n}\right)\, \E(\lambda(\X))\\
  &= O\left(\frac{\dmax^3}{d n}\right), 
\end{align*}
using (\ref{f1}).
Therefore
\begin{equation}
\E(\lambda(\X)^2 ) = \frac{(d-2)^2\, (R+ (d-1)_2)^2}{4(d-1)^4}
   + O\left(\frac{\dmax^3}{dn}\right).
\label{f2}
\end{equation}
The result follows by combining (\ref{lambda0}), (\ref{emu}), (\ref{f1}) and (\ref{f2}),
after some rearranging.
\end{proof}

Now we may easily prove our main theorem.

\begin{proof}[Proof of Theorem~\ref{main-sparse}.]
The number of graphs with degree sequence $\dvec$ is positive when $n$ is sufficiently large,
by (\ref{Nd}).  That is, $\dvec$ is graphical for sufficiently large $n$.
The claimed asymptotic expression for $\E \tau_{\dvec}$ then follows immediately from 
Lemmas~\ref{useful-later},~\ref{eEfg} and~\ref{g-expression}. We also briefly justify the bound
\begin{align*}
  \eta &= \min\biggl\{ \frac{d_\mx^4}{(d-2)^2n},\,
                                 \frac{d_\mx^3\log n}{(d-2)n},\,
                                 d_\mx(d-2)\biggr\} \\
         &= O\biggl( \frac{d_\mx^4}{(d-2)n} + \frac{(\log n)^{5/2}}{n^{1/2}}\biggr).
\end{align*}
Note that $(d_\mx^3\log n)/((d-2)n)\le d_\mx^4/((d-2)n)$ if $\dmax\ge \log n$. 
When $\dmax\leq \log n$, take the geometric mean of $(d_\mx^3\log n)/((d-2)n)$ and $d_\mx(d-2)$.
\end{proof}

\subsection*{Acknowledgements}
We would like to thank the referees for their helpful comments.

\end{document}